\documentclass[10pt]{article}

\usepackage[utf8]{inputenc}
\usepackage[T1]{fontenc}
\usepackage{amssymb}
\usepackage{amsmath}
\usepackage{graphicx}
\usepackage{epsfig}
\usepackage{arydshln}
\usepackage[dvipsnames]{xcolor} 
\usepackage{pgfplots}

\usepackage[left=24mm, textwidth=160mm]{geometry}

\newtheorem{theorem}{Theorem}

\newtheorem{corollary}[theorem]{Corollary}

\newtheorem{lemma}[theorem]{Lemma}

\newtheorem{remark}[theorem]{Remark}

\begin{document}


\title{Two Multi--Draw Coupon Collector models with different retention rules}

\author{Aristides V. Doumas\thanks{Department of Mathematics,
        School of Applied Mathematical and Physical Sciences,
        National Technical University of Athens, Zografou Campus,
        15780 Athens, Greece (adou@math.ntua.gr).}
        \thanks{Archimedes/Athena Research Center, Greece.}\,
        and
        S. Spektor\thanks{School of Data, Computing and Mathematics, Canisius University, 2001 Main Street, Buffalo, NY 14208-1098, USA (spektors@canisius.edu)\\[6pt]} }

\date{}

\maketitle

\vspace{-6mm}

\begin{abstract}
In this paper we study two variants of the generalized coupon collector's problem, where our collector receives at each run $d$ distinct coupons and keeps all the new observed coupons (Problem I), while he chooses the least--collected coupon at each run (Problem II). In both cases we derive explicit formulae for the average of the random variable denoting the number of trials for a complete set of $N$ different types of coupons, which are uniformly distributed. In both cases we present the asymptotic expansion up to the fourth term including the corresponding error term. Then, for both problems we derive the full asymptotic expansion as $N\rightarrow \infty$. We further obtain the leading-order behaviour of the variance, showing that in both problems $\mathrm{Var}\sim \frac{\pi^2}{6}\frac{N^2}{d^2}$, and we establish a rate of convergence to the limiting law. Our analysis is based on the Nørlund–Rice integral method applied to an alternating binomial sum and classical tools from asymptotic analysis. The leading asymptotic term for Problem II was obtained by W. Xu and A. K. Tang [\textit{J. Appl. Probab.} \textbf{48} (2011), 1081--1094]. Finally, for both problems, we derive the limiting distribution under the appropriate normalization. As expected, the limit is standard Gumbel; however, the normalization differs between Problems I and II. As an application, we show that Problem~I describes exactly the sequencing-coverage process in combinatorial motif-based DNA data storage, and our expansions yield closed-form coverage estimates for that setting.
\end{abstract}

\textbf{Keywords.} Urn problems, Coupon collector's problem, Nørlund–Rice integral method, Euler–Maclaurin summation formula, Gumbel distribution, DNA data storage.

\smallskip

\vspace{-1mm}

\textbf{MSC 2020 Mathematics Classification.}  60C05, 60F05, 41A60, 05A16.

\vspace{-1mm}

\section{Introduction and motivation}
Coupon collector problem (CCP) is probably the most popular \textit{Urn problem} due to its mathematical elegance, as well as its applications in several areas of science, from computer science  (search algorithms) and biology, to physics, linguistics, ecology, earth and planetary sciences, economics and finance, as well as, demography, and the social sciences, see, e.g., \cite{BH}, \cite{DH}. The original problem dates back to De Moivre’s treatise \textit{De Mensura Sortis} (1712) and Laplace's \emph{Th\'eorie analytique des probabilit\'es} (1812). For the history of (CCP) see, e.g., \cite{DIXIE} and the references therein. In the classic version of the problem coupons are drawn independently with replacement from a population of $N$ different types which are uniformly distributed. Naturally, the main object of study was the waiting time until all types of coupons have been collected (at least once). In the uniform case the expectation of the waiting time is  $NH_N$ (where $H_N$ is the $N$-th Harmonic number, see, e.g., W. Feller’s classical work \cite{F}), while the problem of collecting $m$ complete sets of coupons is known as the double Dixie cup problem, and was studied by Newman and Shepp, \cite{NS}. A few years later, Erdős and Rényi derived the corresponding limiting distribution revealing the extreme-value nature of the final stage of the collection process, see, \cite{ER}. The problem was related to the Dixie Cup Company, since in the 1930’s the company started a unique program where children collected Dixie lids to receive "Premiums" starting with illustrations of Dixie Circus characters. \footnote{This initiative was part of a broader strategy to promote the brand and engage children, which later expanded to include Hollywood stars and sports personalities during World War II. The program not only entertained children but also helped the company maintain its market presence during challenging times. Historical information on the Dixie Cup Company can be found in the following link: \textit{https://sites.lafayette.edu/dixiecollection/company-history.}}
In its general form (CCP) refers to a population whose members are of $N$ different \emph{types}, and for $j=1,2,\cdots,N,$ we denote by $p_j$ the probability that a randomly chosen coupon is of type $j$, where $p_j > 0$ and $\sum_{j=1}^{N}p_{j}=1$, and the $p_j$'s are not necessarily uniformly distributed.  The members of the population are (always) sampled independently \textit{with replacement} and their types are recorded. A substantial number of papers have investigated variants of the classical coupon collector problem regarding mainly, the distribution of the  coupon probabilities, or even changes in the sampling mechanism. These include generalized Zipf and logarithmic Zipf laws, as well as mixtures and interlacings of different coupon families. The main objects of study were not only the leading asymptotic term of the expectation, but also refined asymptotic expansions, rising moments, the leading behavior of the variance, and the limiting distribution under the appropriate normalization (in most cases, though not always, is Gumbel) see, e.g. (\cite{DP} -- \cite{DIP}) and the references therein.\\
Nevertheless, the (CCP) remains a remarkably fertile source of new variants and continues to provide fruitful directions for further research. For example, we refer the reader to \cite{SIB} and \cite{BR} for a very entertaining variation of the (CCP). In this paper we consider two variants of the uniform (CCP) by changing the sampling mechanism itself. In particular, our collector receives at each run $d$ distinct coupons and keeps all the new observed coupons \textbf{(Problem I)}, while he chooses the least--collected coupon at each run \textbf{(Problem II)}; (here and in what follows $d$ is a positive integer). Thus, the parameter $d$ introduces a choice or acceleration mechanism into the coupon collecting process. We will refer to this variant as the \textbf{(d- CCP)}. Notice, that the case where, $d=1$ is the classic version of the problem.\\
The outline of the paper follows. Section 2 is devoted to \textbf{(Problem I)}. We denote by $T_{N,d}$ the number of trials until all $N$ different types of coupons, which are uniformly distributed, are collected (at least once). Then, we present a closed form for the average $\mathbb {E}\left[T_{N,d}\right]$ in Theorem~\ref{theorem1}, the first four terms plus the error in its asymptotic expansion in Theorem~\ref{result}, and the full asymptotic expansion of $\mathbb {E}\left[T_{N,d}\right]$ in Theorem~\ref{thm:fullI}. The main tools are the Nørlund–Rice integral method applied to an alternating binomial sum and classical asymptotic tools, such as the celebrated Euler–Maclaurin summation formula, as well as complex-analytic techniques. In Section 3 we study the average of the random variable $T_{N,1}$ (i.e., the number of trials until all $N$ different types of the uniformly distributed coupons are collected (at least once), under the least--collected retention rule). The results are presented in Theorems~\ref{theorem12}, \ref{result2}, and~\ref{thm:fullII} respectively (however, the  techniques are different). Finally, in Section 4 we prove that both the studied random variables appropriately normalized converge in distribution to a standard Gumbel random variable. Although the limiting distribution is the same in both cases, the normalizations \textit{differ} reflecting the way the $d$-draw mechanism affects the final phase of the collection process.  The last missing coupons still determine the limiting law, but the scale on which they are collected is altered by the $d$-draw rule. In Section 5 we obtain the leading-order behaviour of the variance in both problems, and in Section 6 we describe an application to sequencing coverage in combinatorial motif-based DNA data storage. Notice that a closely related problem to our (Problem II) was considered in \cite{WT}. In particular, although the selection rule is the same, the stopping rule is different: the process stops when $m$ complete sets of all coupon types have been collected. 
Their method relies mainly on comparison arguments producing asymptotically matching upper and lower bounds. For  $m=1$ the result of \cite{WT} gives just the leading term in the asymptotic expansion of the mean of $T_{N,1}$. Some proofs are gathered in the Appendix (Section 8).

\begingroup
For the reader's convenience, Table~\ref{tab:compare} summarises the two models and our main
results side by side. The central structural finding is that the two retention rules produce
\emph{identical} leading and logarithmic behaviour, differing only in the linear coefficient,
through the constant $C_d$ of~(\ref{0100}).
\begin{table}[h!]
\centering
\renewcommand{\arraystretch}{1.6}
\begin{tabular}{l|c|c}
 & \textbf{Problem I} (keep all new) & \textbf{Problem II} (keep least-collected)\\\hline
random variable & $T_{N,d}$ & $T_{N,1}$\\
exact mean & Theorem~\ref{theorem1} & Theorem~\ref{theorem12}\\
$\mathbb{E}[\,\cdot\,]$, leading & $\dfrac{N}{d}\log N+\dfrac{\gamma}{d}N$ & $\dfrac{N}{d}\log N+\Big(\dfrac{\gamma}{d}+C_d\Big)N$\\
$\log N$ coefficients & \multicolumn{2}{c}{identical (Remark~\ref{rem:d3})}\\
full expansion & Theorem~\ref{thm:fullI} & Theorem~\ref{thm:fullII}\\
variance, leading & $\dfrac{\pi^2}{6}\dfrac{N^2}{d^2}$ & $\dfrac{\pi^2}{6}\dfrac{N^2}{d^2}$\\
normalised limit & $G$ (Gumbel), scale $N/d$ & $G$ (Gumbel), scale $N/d$\\
centring & $\dfrac{N}{d}\log N$ & $\dfrac{N}{d}\log N+NC_d$
\end{tabular}
\caption{The two models and the main results of the paper at a glance.}
\label{tab:compare}
\end{table}
\endgroup

\section{Problem  I - $\mathbb {E}[T_{N,d}]$}

\begin{theorem}  \label{theorem1}
Consider the  $d$ coupon collector's problem (d--CCP), where in each run we keep \textbf{all} the new observed coupons. Let $T_{N,d}$ be the number of trials until all $N$ different types of coupons, which are uniformly distributed, are collected (at least once). Then,
\begin{equation}
\mathbb {E}\left[T_{N,d}\right]=\binom{N}{d} \sum_{k=1}^{N}\left(-1\right)^{k+1}\binom{N}{k}\Bigg(\binom{N}{d}-\binom{N-k}{d}\Bigg)^{-1}.
\label{1}
\end{equation}
\end{theorem}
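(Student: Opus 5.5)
Since $T_{N,d}$ is a positive integer random variable, we use $\mathbb{E}[T_{N,d}]=\sum_{n\ge 0}\mathbb{P}(T_{N,d}>n)$ and evaluate the tail by inclusion--exclusion over the set of coupon types still missing after $n$ runs. Each run is a uniformly random $d$-subset of the $N$ types, and runs are independent. Under the rule of Problem~I every observed coupon is kept, so the collection is complete after $n$ runs exactly when the union of the first $n$ $d$-subsets is all of $\{1,\dots,N\}$.

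\textbf{Step 1: probability that a fixed set of types is missed.} For a fixed set $K$ of $k$ types, one run avoids $K$ with probability $\binom{N-k}{d}/\binom{N}{d}$. This is zero when $N-k<d$, with the usual convention $\binom{m}{d}=0$ for $m<d$. By independence, all $n$ runs avoid $K$ with probability $q_k^n$, where $q_k:=\binom{N-k}{d}/\binom{N}{d}$.

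\textbf{Step 2: inclusion--exclusion.} Let $A_j$ be the event that type $j$ is missing after $n$ runs. Then
\begin{equation*}
\mathbb{P}(T_{N,d}>n)=\mathbb{P}\Big(\bigcup_{j=1}^N A_j\Big)=\sum_{k=1}^{N}(-1)^{k+1}\binom{N}{k}q_k^{\,n}.
\end{equation*}

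\textbf{Step 3: summation over $n$.} We sum over $n\ge 0$ and interchange the finite sum over $k$ with the series in $n$. This is legitimate because for $k\ge1$ we have $0\le q_k<1$, so each geometric series converges absolutely. This gives
\begin{equation*}
\mathbb{E}[T_{N,d}]=\sum_{k=1}^N(-1)^{k+1}\binom{N}{k}\frac{1}{1-q_k}=\binom{N}{d}\sum_{k=1}^N(-1)^{k+1}\binom{N}{k}\Big(\binom{N}{d}-\binom{N-k}{d}\Big)^{-1},
\end{equation*}
which is (\ref{1}).

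\textbf{Main obstacle.} The argument is routine, and the only points needing care are these. First, we must use the convention $\binom{N-k}{d}=0$ for $k>N-d$; in that range the tail terms vanish for $n\ge1$, and the $n=0$ term is handled consistently by $1/(1-0)=1$. Second, $q_k<1$ must hold for every $k\ge1$, since a $d$-set can always hit a nonempty $K$ when $d\ge1$; this guarantees the denominators in (\ref{1}) are nonzero. Third, the modelling step should be stated clearly: because Problem~I retains all new coupons, completion depends only on the union of the drawn sets.
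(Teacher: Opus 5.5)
Your proposal is correct and follows the paper's argument: inclusion--exclusion for the tail probability, then the tail-sum formula for the mean, then interchanging the finite sum with the series and summing the geometric series. Your index $n=k-1$ matches the paper's $\mathbb{P}\{T_{N,d}\ge k\}$ with exponent $k-1$. You also make explicit the points the paper leaves implicit, namely that $q_k<1$ for $k\ge 1$ and the convention $\binom{N-k}{d}=0$ for $k>N-d$.
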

\textbf{Proof}. It is convenient to introduce the events $A_{j}^{k}$, $1 \leq j \leq N$, that the type $j$ is not detected until trial $k$ (included). Then,
\begin{align}
P\left\{T_{N,d} \geq k\right\}&=
P\left(A_{1}^{k}\cup A_{2}^{k}\cup \cdots \cup A_{N}^{k}\right)\qquad k=1,2,\ldots \nonumber\\
&=\sum_{\substack{J \subset \{1,\ldots,N\} \\ J \neq \varnothing}}
(-1)^{|J|-1}P\Bigg(\bigcap_{j \in J} A^{k-1}_j \Bigg) \nonumber\\
&=\sum_{m=1}^{N}\left(-1\right)^{m-1}\binom{N}{m}\Bigg(\frac{\binom{N-m}{d}}{\binom{N}{d}}\Bigg)^{k-1}, \label{dix}
\end{align}
where, in the second equality we have used the inclusion–exclusion principle, while the corresponding sum extends over the $2^{N}-1$ non empty subsets $J$ of $\left\{1,...,N\right\}$, and, $|J|$ denotes the cardinality of $J$. Since $T_{N,d}$ is a non-negative integer-valued random variable we have
\begin{align*}
\mathbb {E}\left[P\left\{T_{N,d} \geq k\right\}\right]=\sum_{k=1}^{\infty}\sum_{m=1}^{N}\left(-1\right)^{m-1}\binom{N}{m}\Bigg(\frac{\binom{N-m}{d}}{\binom{N}{d}}\Bigg)^{k-1}.
\end{align*}
Now the result follows immediately by interchanging the order of summation and adding the resulting geometric series.  $\hfill \blacksquare$
\begin{remark}
The result of Theorem~\ref{theorem1} may also be obtained by formulating the problem as a finite Markov chain and applying standard potential theory for hitting times (see, e.g., \cite{N}).\end{remark}
The problem of estimating \textit{asymptotically} high order differences of the type of the RHS of (\ref{1}) is delicate. The basic approach to the asymptotic analysis of sums of the form (\ref{1}) is well known as the technique of \textit{Rice or (Nørlund–Rice) integrals}. It has found numerous applications in the analysis of algorithms. In particular, in the study of digital trees, digital search trees, and quadtrees, among others. We refer the interested reader to the excellent work of P. Flajolet and R. Sedgewick \cite{FS} and the references therein.

\begin{lemma}  \label{firstlemma}
Let $\phi(s)$ be analytic in a domain that contains the half--line $[n_{0},\infty)$. Then, the differences of the sequence $\left\{\phi(k)\right\}$ admit the integral representation
\begin{equation}
\sum_{k=n_{0}}^{N}\binom{N}{k}\left(-1\right)^{k+1}\phi(k)=\frac{\left(-1\right)^{N+1}}{2i \pi } \int_{\mathcal{C}} \phi(s)\frac{N!}{s\left(s-1\right)\cdots\left(s-N\right)}\,ds,\label{2}
\end{equation}
where $\mathcal{C}$ is a positively oriented closed curve that lies in the domain of analyticity of $\phi(s)$, encircles $[n_{0}, N]$, and does not include any of the integers $0,1,\cdots,n_{0}-1$. 
\end{lemma}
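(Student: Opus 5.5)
The plan is to evaluate the contour integral on the right of (\ref{2}) by the residue theorem and match it term by term with the left-hand side. Set
\[
K_N(s)=\frac{N!}{s(s-1)\cdots(s-N)}.
\]
This is a rational function with simple poles at $s=0,1,\dots,N$ and no other singularities. The curve $\mathcal{C}$ lies in the domain of analyticity of $\phi$, encircles $[n_0,N]$, and excludes the integers $0,1,\dots,n_0-1$. I will also take the interior of $\mathcal{C}$ to lie in that domain, which is what the statement intends, for instance a thin contour around the segment $[n_0,N]$ inside a neighbourhood of the half-line. Under this reading, the only singularities of $\phi(s)K_N(s)$ enclosed by $\mathcal{C}$ are the simple poles at $s=k$ with $n_0\le k\le N$.

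The one computation is the residue of $K_N$ at an integer $k\in\{0,\dots,N\}$:
\[
\operatorname{Res}_{s=k}K_N(s)=\frac{N!}{\prod_{j\neq k,\,0\le j\le N}(k-j)}=\frac{N!}{k!\,(-1)^{N-k}(N-k)!}=(-1)^{N-k}\binom{N}{k}.
\]
Indeed, the factors with $j<k$ give $k!$, and the factors with $j>k$ give $(-1)^{N-k}(N-k)!$. Since $\phi$ is analytic at $k$, the residue of $\phi K_N$ at $s=k$ is $\phi(k)(-1)^{N-k}\binom{N}{k}$.

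The residue theorem for a positively oriented curve then gives
\[
\frac{1}{2i\pi}\int_{\mathcal{C}}\phi(s)K_N(s)\,ds=\sum_{k=n_0}^{N}(-1)^{N-k}\binom{N}{k}\phi(k).
\]
Multiplying by $(-1)^{N+1}$ and using $(-1)^{N+1}(-1)^{N-k}=(-1)^{2N+1-k}=(-1)^{k+1}$ yields exactly (\ref{2}).

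No step here is a real obstacle. The only point needing care is the hypothesis on $\mathcal{C}$: one must ensure that no poles $0,\dots,n_0-1$ and no singularities of $\phi$ lie inside the contour. This follows from the stated assumptions once the interior of $\mathcal{C}$ is taken within the analyticity domain, for example by choosing a contour hugging $[n_0,N]$ at distance less than $\tfrac12$ and less than the distance from the segment to the boundary of the domain.
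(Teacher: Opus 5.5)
Your proof is correct and takes the same route as the paper, which simply cites residue calculus at the simple poles $s=n_0,\dots,N$ of the kernel and defers details to Flajolet--Sedgewick. Your computation $\operatorname{Res}_{s=k}K_N(s)=(-1)^{N-k}\binom{N}{k}$ supplies exactly those details and agrees with the paper's (\ref{9}). Your added requirement that the interior of $\mathcal{C}$ lie in the analyticity domain of $\phi$ is a sensible tightening of the hypothesis as stated.
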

\textbf{Proof}. This is a direct application of residue calculus, taking into account contributions of the simple poles at the integers $n_{0}, \cdots,N$. For details, see, \cite{FS}.  $\hfill \blacksquare$
\begin{remark} 
The kernel in relation (\ref{2}) is also expressible in terms of Gamma functions and is known as the Rice kernel.
\end{remark}
\begin{theorem}\label{thm:poles}
Set
\begin{equation}
\phi_{N}(s):=\Bigg(1-\frac{\binom{N-s}{d}}{\binom{N}{d}}\Bigg)^{-1}=\Bigg(1-\prod_{j=0}^{d-1}\left(1-\frac{s}{N-j}\right)\Bigg)^{-1}\label{00}
\end{equation}
and
\begin{equation} 
Q_{N}(s):=1-\prod_{j=0}^{d-1}\left(1-\frac{s}{N-j}\right).\label{3}
\end{equation}
Then, the poles of $\phi_N(s)$ are the zeros of the polynomial $Q_N(s)$. Clearly, $s=0$ is a simple pole of $\phi_{N}(s)$. Moreover, if \(d\) is odd, then the only real pole is \(s=0\). In case where \(d\) is even, then the only real poles are
    \[
    s=0
    \qquad\text{and}\qquad
    s=2N-d+1;
    \]
In particular, all poles of $\phi_N(s)$ are simple. The $d-1$ nonzero zeros of $Q_{N}(s)$ are denoted as $\omega_{1,N},\, \omega_{2,N},\cdots,\omega_{d-1,N}$ (for all positive integer values of $d$; when $d=1$ there are none, and $s=0$ is the only pole).
\end{theorem}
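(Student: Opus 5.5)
The plan is to reduce everything to the polynomial $P_N(s):=\prod_{j=0}^{d-1}\bigl(1-\frac{s}{a_j}\bigr)$, where $a_j:=N-j$, so that $Q_N=1-P_N$ and $\phi_N=1/Q_N$. Since $Q_N$ is a polynomial of exact degree $d$ (its leading coefficient is $-(-1)^d/\prod a_j\neq 0$), the poles of $\phi_N$ are exactly the zeros of $Q_N$, i.e.\ the solutions of $P_N(s)=1$. Clearly $Q_N(0)=0$. Moreover $Q_N'(0)=\sum_j 1/a_j>0$, so $s=0$ is a simple pole. Throughout I assume $d\le N$, so that all $a_j\ge 1$ are distinct positive integers. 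Write the roots in increasing order as $b_k=N-d+k$, $k=1,\dots,d$, and set $m:=(b_1+b_d)/2=(2N-d+1)/2$.

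\emph{Step 1 (reflection symmetry).} The roots $b_k$ are symmetric about $m$. Hence $\prod_k(b_k-(2m-s))=(-1)^d\prod_k(b_k-s)$, which gives
\[
P_N(2m-s)=(-1)^d P_N(s).
\]
For $d$ even this yields $P_N(2m)=P_N(0)=1$, so $s=2N-d+1$ is a real pole. It is simple, because $P_N'(2m)=P_N'(0)\cdot(-1)\neq 0$ by the same symmetry.

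\emph{Step 2 (no other real zeros).} I split $\mathbb{R}$ into three regions.
\begin{itemize}
\item For $s<0$, every factor exceeds $1$, so $P_N(s)>1$.
\item For $s>2m$, Step 1 gives $P_N(s)=(-1)^dP_N(2m-s)$ with $2m-s<0$. This equals a number $>1$ if $d$ is even and $<-1$ if $d$ is odd, so it is never $1$.
\item For $0<s<2m$, the key claim is $|P_N(s)|<1$.
\end{itemize}
To prove the claim, compare the multiset of distances $\{|b_k-s|\}$ with $\{b_k\}$ after sorting both increasingly.
\begin{itemize}
\item If $0<s\le b_1$, then $|b_k-s|=b_k-s<b_k$ termwise.
\item If $b_1\le s\le b_d$, the points are unit-spaced, so the $k$-th smallest distance is at most $\lceil k/2\rceil\le k\le b_k$, using $b_k\ge k$ since $N\ge d$. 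The smallest distance is $<1\le b_1$ unless $s$ is a root, in which case $P_N(s)=0$.
\item If $b_d\le s<2m$, reduce to the first case via Step 1.
\end{itemize}
In every case $|P_N(s)|=\prod|b_k-s|/\prod b_k<1$. Together with the two unbounded regions, this shows that the only real zeros are $0$, plus $2N-d+1$ when $d$ is even.

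\emph{Step 3 (simplicity of all zeros).} A multiple zero $s_0$ of $Q_N$ satisfies $P_N(s_0)=1$ and $P_N'(s_0)=0$. By the Gauss–Lucas theorem, the critical points of $P_N$ lie in the convex hull of its roots, the real segment $[b_1,b_d]$. There Step 2 gives $|P_N|<1$, which is a contradiction. Hence all $d$ zeros are simple. Labelling the nonzero ones $\omega_{1,N},\dots,\omega_{d-1,N}$ completes the statement; for $d=1$, $Q_N(s)=s/N$ and only $s=0$ remains.

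The main obstacle is the middle-region bound $|P_N|<1$ on $(0,2m)$. It must hold uniformly in $d\le N$, including $d$ close to $N$, where a naive termwise estimate $|1-s/a_j|<1$ fails. I would therefore rely on the sorted-distance comparison above, checking carefully the bound $\lceil k/2\rceil\le b_k$ and the strictness.
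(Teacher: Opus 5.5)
Your proof is correct up to one misstated bound (see below), and it follows essentially the paper's route. The paper substitutes $u=c-s$ with $c=N-r$ for $d=2r+1$ and $c=N-r+\tfrac12$ for $d=2r$. This $c$ is exactly your centre $m=(2N-d+1)/2$, and the parity in $u$ of the resulting equations is your identity $P_N(2m-s)=(-1)^dP_N(s)$. Your Step~2 supplies the details of what the paper calls ``an easy exercise'' (the cases $|u|<c$ and $|u|>c$). For simplicity, the paper evaluates $Q_N'(s_0)=\sum_j 1/(N-j-s_0)$ at a zero, observes that its imaginary part is nonzero when $s_0\notin\mathbb{R}$, and checks the signs at the two real zeros. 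Your Gauss--Lucas step (Rolle suffices, since $P_N$ has $d$ distinct real roots), combined with $|P_N|<1$ on $[b_1,b_d]$, is an equivalent repackaging of the same fact. It also makes your separate simplicity checks at $0$ and $2m$ redundant.

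The correction concerns the step you flagged as the main obstacle. The claim that the $k$-th smallest of the distances $|b_i-s|$ is at most $\lceil k/2\rceil$ is false near the ends of $[b_1,b_d]$. For $s=b_1$ and $d\ge 4$ the distances are $0,1,2,\dots,d-1$, so the fourth smallest is $3>\lceil 4/2\rceil$. What is true, and is all you actually use, is this: for $2\le k\le d$ the point $s$ lies between some $k$ consecutive roots, whose span is $k-1$. Hence the $k$-th smallest distance is at most $k-1<k\le b_k$, while the smallest is at most $\tfrac12<b_1$. This yields the strict bound $|P_N(s)|<1$ on $[b_1,b_d]$. Also record explicitly that $P_N(2m)=-1\neq 1$ for odd $d$, since neither Step~1 nor your three open regions address the point $s=2m$ in that case.
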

\textbf{Proof.} See, Appendix.\\
 \begin{figure}[h!]
  \centering
  \includegraphics[width=0.82\textwidth]{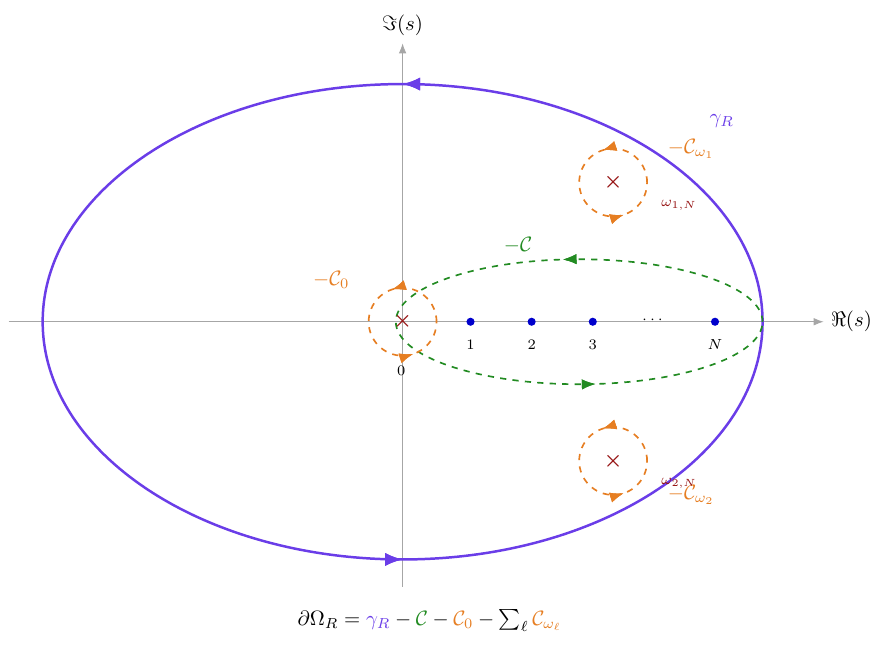}
  \caption{The region $\Omega_{R}$.}\label{fig:contour}
\end{figure} 
By invoking Lemma~\ref{firstlemma} in Theorem~\ref{theorem1} we have
\[
\mathbb {E}[T_{N,d}]
=
\frac{(-1)^{N+1}}{2\pi i}
\int_{\mathcal C}\phi_N(s)K_N(s)\,ds,
\]
where
\begin{equation}
K_{N}(s):=\frac{N!}{s\left(s-1\right)\cdots\left(s-N\right)}. \label{KN}
\end{equation}
Set
\begin{equation}
G_N(s):=(-1)^{N+1}K_N(s)\phi_N(s).
 \label{GN}
\end{equation}
Hence,
\begin{equation}
\mathbb E[T_{N,d}]
=
\frac{1}{2\pi i}\int_{\mathcal C}G_N(s)\,ds.
\label{3bb}
\end{equation}
\begin{theorem}\label{thm:resdecomp}
Regarding the average of the random variable $T_{N,d}$ of the d-CCP problem we have
\begin{equation}
\mathbb {E}[T_{N,d}]
=
-
\operatorname{Res}_{s=0}G_N(s)
-
\sum_{j=1}^{d-1}\operatorname{Res}_{s=\omega_{j,N}}G_N(s).\label{3b}
\end{equation}
\end{theorem}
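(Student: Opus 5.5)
The plan is to use the fact that $G_N(s)$ is a rational function, so the contour in (\ref{3bb}) can be expanded to a large circle, where the integral vanishes. The residues picked up along the way are exactly those at $s=0$ and at the $\omega_{j,N}$; the residues at $1,\dots,N$ are accounted for by $\mathcal C$ itself.

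\emph{Step 1: choice of contour.} First I will check that the contour $\mathcal C$ of Lemma~\ref{firstlemma} (with $n_0=1$) can be taken as a thin closed curve around the segment $[1,N]$ that encloses no pole of $\phi_N$. By Theorem~\ref{thm:poles}, the only real poles of $\phi_N$ are $s=0$ and, when $d$ is even, $s=2N-d+1>N$. Neither lies in $[1,N]$. The nonreal zeros $\omega_{j,N}$ of $Q_N$ are at positive distance from $[1,N]$. Hence a sufficiently thin neighbourhood of $[1,N]$ contains no pole of $\phi_N$, and (\ref{3bb}) holds with this $\mathcal C$. Inside $\mathcal C$, $G_N$ has only the simple poles of $K_N$ at $k=1,\dots,N$.

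\emph{Step 2: growth at infinity.} Next I will bound $G_N$ on a large circle $|s|=R$:
\begin{itemize}
\item $K_N(s)=N!/\prod_{k=0}^N(s-k)=O(|s|^{-N-1})$;
\item $Q_N(s)$ is a polynomial of exact degree $d$ with leading coefficient $-(-1)^d\prod_{j=0}^{d-1}(N-j)^{-1}\neq0$, so $\phi_N(s)=1/Q_N(s)=O(|s|^{-d})$.
\end{itemize}
Thus $G_N(s)=O(R^{-N-d-1})$ with $N+d+1\ge 3$, and $\oint_{|s|=R}G_N(s)\,ds\to0$ as $R\to\infty$.

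\emph{Step 3: residue count.} For $R$ large, the region between $\mathcal C$ and $|s|=R$ contains exactly the poles $s=0$ and $s=\omega_{j,N}$, $j=1,\dots,d-1$. These are the zeros of $Q_N$ by Theorem~\ref{thm:poles}, and they include $2N-d+1$ when $d$ is even. By the residue theorem on this region,
\[
\frac1{2\pi i}\oint_{|s|=R}G_N\,ds-\frac1{2\pi i}\int_{\mathcal C}G_N\,ds=\operatorname{Res}_{s=0}G_N+\sum_{j=1}^{d-1}\operatorname{Res}_{s=\omega_{j,N}}G_N.
\]
Letting $R\to\infty$ and using (\ref{3bb}) gives (\ref{3b}).

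One coincidence must be excluded: at $s=0$ both $K_N$ and $\phi_N$ have poles, so $G_N$ has a double pole there. This is harmless, since $\operatorname{Res}_{s=0}$ is just computed from a Laurent expansion; it affects only how the residue is evaluated later, not the decomposition.

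\emph{Main obstacle.} The only delicate point is ensuring that no $\omega_{j,N}$ coincides with or lies on an integer $k\in\{1,\dots,N\}$, since otherwise Lemma~\ref{firstlemma} would not apply. I will handle this directly. For $1\le k\le N$ one has $0\le\binom{N-k}{d}<\binom Nd$, so
\[
Q_N(k)=1-\binom{N-k}{d}\Big/\binom Nd>0,
\]
and hence $Q_N(k)\neq 0$. Together with the real-pole description in Theorem~\ref{thm:poles}, this shows $[1,N]$ is free of poles of $\phi_N$, which is what Step 1 needs.
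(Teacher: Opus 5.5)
Your proposal is correct and takes essentially the same approach as the paper. The paper also removes small circles around $0$ and around the $\omega_{j,N}$, together with the region inside $\mathcal C$, from a large disc, applies Cauchy's theorem, and makes the outer integral vanish via $G_N(s)=\mathcal{O}(|s|^{-N-d-1})$. Your extra check that a thin $\mathcal C$ around $[1,N]$ avoids every zero of $Q_N$ is a point the paper leaves implicit, and it is worth making: for even $d$ the pole $2N-d+1$ lies on $[1,\infty)$, so the hypothesis of Lemma~\ref{firstlemma} as stated does not literally hold.
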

\textbf{Proof.} We will evaluate the contour integral of (\ref{3bb}) by means of the residue theorem. We denote by \(\gamma_R\) a large positively oriented circle of radius \(R\), by  $C_{0}$ a closed, positively oriented curve of small radius around zero, and, by $C_{\omega_{j}}$ closed, positively oriented curves of small radius around $\omega_{j,N},\, j=1,2,\dots, d-1,$ respectively. Let, also, \(\Omega_R\) denotes the open region enclosed by \(\gamma_R\), minus the closed regions enclosed by the curves $C_{0}$ and $C_{\omega_{j}},\, j=1,2,\dots, d-1$. Hence, for the boundary of \(\Omega_R\) we have (see Figure~\ref{fig:contour})
\begin{equation}
\partial\Omega_R
=
\gamma_R-\mathcal C-\mathcal C_0-\sum_{j=1}^{d-1}\mathcal C_{\omega_j}.
\label{013}
\end{equation}
Clearly, \(G_N\) is analytic inside \(\Omega_R\). Hence, by Cauchy's theorem,
\begin{equation}
\int_{\partial\Omega_R}G_N(s)\,ds=0.
\label{014}
\end{equation}
From relation (\ref{013}) we get
\begin{equation}
\int_{\gamma_R}G_N(s)\,ds
-
\int_{\mathcal C}G_N(s)\,ds
-
\int_{\mathcal C_0}G_N(s)\,ds
-
\sum_{j=1}^{d-1}\int_{\mathcal C_{\omega_{j}}}G_N(s)\,ds
=
0.
\label{015}
\end{equation}
Now, we let \(R\to\infty\). Since
\[
K_N(s)=\mathcal{O}(|s|^{-N-1}),
\qquad
\phi_N(s)=\mathcal{O}(|s|^{-d}),
\qquad |s|\to\infty,
\]
it follows that
\begin{equation}
G_N(s)=\mathcal{O}(|s|^{-N-d-1}).
\label{016}
\end{equation}
Therefore,
\begin{equation}
\left|\int_{\gamma_R}G_N(s)\,ds\right|
\le
2\pi R\,\sup_{|s|=R}|G_N(s)|
=
\mathcal{O}(R^{-N-d})\to 0.
\label{017}
\end{equation}
Passing to the limit in (\ref{015}) we get
\begin{equation}
\int_{\mathcal C}G_N(s)\,ds
=
-
\int_{\mathcal C_0}G_N(s)\,ds
-
\sum_{j=1}^{d-1}\int_{\mathcal C_{\omega_{j}}}G_N(s)\,ds.
\label{018}
\end{equation}
Dividing by \(2\pi i\) and using the definition of the residue we obtain (\ref{3b}).    $\hfill \blacksquare$

\subsection{Asymptotic analysis}
\textit{Step I: The Residue at zero.}\,\,Let us now turn our attention to the polynomial $Q_{N}(s)$ of relation (\ref{3}) around $s=0$ (in the complex plane). As $N\rightarrow \infty$ we have the following
\begin{lemma}\label{lem:res0}
\begin{equation}
\operatorname{Res}_{s=0}G_N(s)
=-\left[
\frac{N}{d}\log N
+\frac{\gamma N}{d}
-\frac{d-1}{2d}\log N
+\left(\frac{1}{2}-\frac{d-1}{2d}\gamma\right)
+\mathcal{O}\left(\frac{\log N}{N}\right)\right].
\label{res0lemma}
\end{equation}
\end{lemma}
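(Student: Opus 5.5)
The plan is to compute the residue exactly as a finite expression in $N$ and $d$, and only then expand it asymptotically. The point $s=0$ is a simple pole of $K_N$ and, by Theorem~\ref{thm:poles}, a simple pole of $\phi_N$, so $G_N$ has a double pole there. The residue is therefore the coefficient of $s^{-1}$ in the product of two Laurent expansions, each of which we need only to first order beyond the leading term.

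\emph{The kernel.} Since $(s-1)\cdots(s-N)=(-1)^N N!\prod_{k=1}^N(1-s/k)$, we have
\[
(-1)^{N+1}K_N(s)=-\frac{1}{s}\prod_{k=1}^{N}\Bigl(1-\frac{s}{k}\Bigr)^{-1}=-\frac{1}{s}\bigl(1+H_N s+\mathcal{O}(s^2)\bigr).
\]

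\emph{The factor $\phi_N$.} Put $a_1:=\sum_{j=0}^{d-1}\frac{1}{N-j}$ and $a_2:=\sum_{0\le i<j\le d-1}\frac{1}{(N-i)(N-j)}$. Expanding the product in (\ref{3}) gives $Q_N(s)=a_1 s-a_2 s^2+\mathcal{O}(s^3)$, hence
\[
\phi_N(s)=\frac{1}{a_1 s}\Bigl(1+\frac{a_2}{a_1}s+\mathcal{O}(s^2)\Bigr).
\]

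\emph{The exact residue.} Multiplying the two expansions yields
\[
\operatorname{Res}_{s=0}G_N(s)=-\frac{1}{a_1}\Bigl(H_N+\frac{a_2}{a_1}\Bigr),
\]
which holds exactly for every $N\ge d$. This is the only place where the structure of the problem enters.

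\emph{Asymptotics.} We expand each piece for fixed $d$ as $N\to\infty$.
\begin{itemize}
\item From $\frac{1}{N-j}=\frac1N+\frac{j}{N^2}+\mathcal{O}(N^{-3})$ we get $a_1=\frac{d}{N}\bigl(1+\frac{d-1}{2N}+\mathcal{O}(N^{-2})\bigr)$, so $\frac{1}{a_1}=\frac{N}{d}-\frac{d-1}{2d}+\mathcal{O}(N^{-1})$.
\item Likewise $a_2=\binom d2 N^{-2}\bigl(1+\mathcal{O}(N^{-1})\bigr)$, so $\frac{a_2}{a_1^2}=\frac{d-1}{2d}+\mathcal{O}(N^{-1})$.
\item For the harmonic number we use $H_N=\log N+\gamma+\frac{1}{2N}+\mathcal{O}(N^{-2})$.
\end{itemize}
Multiplying out, $\frac{H_N}{a_1}=\frac Nd\log N+\frac{\gamma N}{d}-\frac{d-1}{2d}\log N-\frac{d-1}{2d}\gamma+\frac{1}{2d}+\mathcal{O}\bigl(\frac{\log N}{N}\bigr)$. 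The error term comes from the $\mathcal{O}(N^{-1})$ remainder in $1/a_1$ multiplying $\log N$. Adding $\frac{a_2}{a_1^2}$ brings the constant to $\frac{1}{2d}+\frac{d-1}{2d}=\frac12$. Together with the overall minus sign this is exactly (\ref{res0lemma}).

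The main obstacle is bookkeeping rather than anything conceptual. First, the $\mathcal{O}(N^{-1})$ term of $1/a_1$ must be computed exactly, because it multiplies $\log N$ and produces the $-\frac{d-1}{2d}\log N$ term; any slip there would be visible in the final answer. Second, the constant $\tfrac12$ arises only after combining the $\frac{1}{2N}$ term of $H_N$ with the second-order term $a_2/a_1^2$ from $\phi_N$, so neither contribution can be dropped. I would verify the expansion of $1/a_1$ against the case $d=1$: there $a_1=1/N$ and $a_2=0$, the residue is $-NH_N$, and $\mathbb{E}[T_{N,1}]=NH_N$ as expected for the classical collector.
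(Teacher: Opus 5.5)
Your proposal is correct and follows essentially the same route as the paper: you derive the exact identity $-\operatorname{Res}_{s=0}G_N(s)=H_N/A(N)+B(N)/A(N)^2$ from the Laurent expansions of $\phi_N$ and the kernel at $s=0$, and then expand $1/A(N)$, $H_N$ and $B(N)/A(N)^2$. The only cosmetic difference is how you get the constant term $-H_N$ of $(-1)^{N+1}K_N(s)$: you read it off the product $\prod_{k=1}^{N}(1-s/k)^{-1}$, while the paper uses the partial-fraction decomposition of the kernel together with the identity $\sum_{m=1}^{N}(-1)^{m+1}\binom{N}{m}\frac1m=H_N$.
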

\textit{Proof.} For the polynomial $Q_{N}(s)$ of relation (\ref{3}) we have
\begin{equation}
Q_{N}(s)=A(N)s-B(N)s^{2}+\mathcal{O}(s^{3}),\label{4}
\end{equation}
where,
\begin{equation}
A(N):=\sum_{j=0}^{d-1}\frac{1}{N-j} \label{5}
\end{equation}
and
\begin{equation}
B(N):=\sum_{0\leq i<j \leq d-1}\frac{1}{\left(N-i\right)\left(N-j\right)}. \label{6}
\end{equation}
By invoking relation (\ref{00}) we easily get
\begin{equation}
\phi_N(s)=\frac{1}{A(N) s}+\frac{B(N)}{A(N)^2}+\mathcal{O}(|s|),
\qquad s\to 0,\quad s\in\mathbb C. \label{7}
\end{equation}
Since all the poles of $K_{N}(s)$ are simple we have
\begin{align}
(-1)^{N+1} K_{N}(s)&=\frac{(-1)^{N+1} N!}{\prod_{j=0}^{N}\left(s-j\right)} \nonumber \\
&=\sum_{m=0}^{N}\frac{A_{m}}{s-m},\label{8}
\end{align}
where,
\begin{align}
A_{m}:&=\operatorname{Res}_{s=m}\left[(-1)^{N+1}K_{N}(s)\right] \nonumber \\
&= \lim_{s\to m} (s-m)
\frac{(-1)^{N+1}N!}{\prod_{j=0}^{N}(s-j)} \nonumber  \\
&= (-1)^{m+1}\binom{N}{m},
\quad m=0,1,\ldots,N.\label{9}
\end{align}
Since,
\begin{equation*}
\sum_{m=1}^{N}(-1)^{m+1}\binom{N}{m}\frac{1}{m}=H_{N},
\end{equation*}
where, $H_N = \sum_{j=1}^{N}\frac{1}{j}$, is the $N-$th Harmonic number relation (\ref{8}) in view of (\ref{9}) yields
\begin{align}
(-1)^{N+1} K_{N}(s)&=-\frac{1}{s}-H_{N}\left(1+\mathcal{O}(|s|)\right).\label{10}
\end{align}
In view of (\ref{7}) and (\ref{10}), relation (\ref{GN}) yields
\begin{equation}
-\operatorname{Res}_{s=0}G_N(s)=\frac{H_N}{A(N)}+\frac{B(N)}{A(N)^{2}},
\label{11}
\end{equation}
i.e., the coefficient of $-\frac{1}{s}$. From relation (\ref{5}) we have
\begin{align}
\frac{1}{A(N)}&=\frac{N}{d}\left[1+\frac{d-1}{2N}+\frac{\left(d-1\right)\left(2d-1\right)}{6N^{2}}+\mathcal{O}(\frac{1}{N^{3}})\right]^{-1} \nonumber \\
&=\frac{N}{d}-\frac{d-1}{2d}-\frac{d^{2}-1}{12dN}+\mathcal{O}\left(\frac{1}{N^{2}}\right).\label{12}
\end{align}
The full asymptotic expansion of $H_N$ is well known by the celebrated Euler--Maclaurin summation formula (see, e.g., \cite{BO}):
\begin{equation}
H_N = \log N+\gamma+\frac{1}{2N}+\mathcal{O}\left(\frac{1}{N^{2}}\right), \,\,N\rightarrow \infty,\label{13}
\end{equation}
where $\gamma=0.5772156649...$ is the Euler-Mascheroni constant. Finally, it is easy for one to check that
\begin{align}
\frac{B(N)}{A(N)^{2}}
&=
\frac{1}{2}\Bigg(
1-\frac{\sum_{j=0}^{d-1}(N-j)^{-2}}{A(N)^{2}}
\Bigg) \nonumber \\
&=
\frac{1}{2}\Bigg(
1-
\frac{
\frac{d}{N^2}\Big(1+\frac{d-1}{N}
+\mathcal{O}\left(\frac{1}{N^2}\right)\Big)
}{
\frac{d^2}{N^2}\Big(1+\frac{d-1}{N}
+\mathcal{O}\left(\frac{1}{N^2}\right)\Big)
}
\Bigg)\nonumber\\
&=\frac{d-1}{2d}\left(1+\mathcal{O}\left(\frac{1}{N^{2}}\right)\right), \,\,N\rightarrow \infty.
\label{14}
\end{align}
From relations ((\ref{11}), (\ref{12}), (\ref{13}), and, (\ref{14})), we get (as $N\rightarrow \infty$)
\begin{equation}
-\operatorname{Res}_{s=0}G_N(s)
=
\frac{N}{d}\log N
+\frac{\gamma N}{d}
-\frac{d-1}{2d}\log N
+\left(\frac{1}{2}-\frac{d-1}{2d}\gamma\right)
+\mathcal{O}\left(\frac{\log N}{N}\right)
\label{15}
\end{equation}
and the proof is completed.  $\hfill \blacksquare$\\\\
\textit{Step II: The Residue at the non zero poles.} As we will see the contribution (in total) of all the nonzero poles in the residue decomposition of $E[T_{
N,d}]$ is exponentially small.
\begin{lemma}\label{lem:nonzero}
\begin{equation}
\sum_{j=1}^{d-1}\operatorname{Res}_{s=\omega_{j,N}}G_N(s)=\mathcal{O}\left(e^{-A_{d}N}\right),\,\,\, N\rightarrow \infty,\label{16}
\end{equation}
where, $A_{d}$ is a positive constant depending on $d$.
\end{lemma}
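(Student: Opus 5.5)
We locate the nonzero poles, compute their residues exactly, and bound each one.

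\emph{Location of the poles.} Set $s = N t$. Then
\[
Q_N(Nt)=1-\prod_{j=0}^{d-1}\Bigl(1-\tfrac{Nt}{N-j}\Bigr)\longrightarrow 1-(1-t)^d
\]
uniformly on compacts as $N\to\infty$. The nonzero zeros of the limit are $t_k=1-e^{2\pi i k/d}$, $k=1,\dots,d-1$. All of them are simple, and all satisfy $\operatorname{Re}t_k=1-\cos(2\pi k/d)>0$. By Hurwitz's theorem, or simply the implicit function theorem applied to the simple roots, we get $\omega_{k,N}=N t_k+\mathcal{O}(1)$. Hence $\omega_{k,N}$ lies at distance of order $N$ from the real interval $[0,N]$, unless $t_k$ is real. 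That happens only for $d$ even and $k=d/2$, where $t_k=2$ and the pole is $2N-d+1>N$ by Theorem~\ref{thm:poles}. In every case $\omega_{k,N}$ stays at distance $\ge cN$ from the set $\{0,1,\dots,N\}$, for some $c=c_d>0$.

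\emph{The residues.} Each pole is simple (Theorem~\ref{thm:poles}) and $K_N$ is analytic at $\omega_{k,N}$, so
\[
\operatorname{Res}_{s=\omega_{k,N}}G_N(s)=(-1)^{N+1}\frac{K_N(\omega_{k,N})}{Q_N'(\omega_{k,N})}.
\]
The derivative satisfies $Q_N'(Nt)=N^{-1}\,d(1-t)^{d-1}(1+o(1))$ near $t_k$, and $(1-t_k)^{d-1}$ has modulus $1$. Therefore $|1/Q_N'(\omega_{k,N})|=\mathcal{O}(N)$, and everything reduces to showing that $K_N(\omega_{k,N})$ is exponentially small.

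\emph{Bounding the Rice kernel.} This is the main step. Write $K_N(s)=\Gamma(N+1)\Gamma(s-N)/\Gamma(s+1)$ and apply Stirling's formula with $s=Nt$, $t$ in a small neighbourhood of $t_k$:
\[
\log|K_N(Nt)| = N\,\operatorname{Re}\bigl[(t-1)\log(t-1)-t\log t\bigr]+\mathcal{O}(\log N).
\]
Define $f(t):=\operatorname{Re}[(t-1)\log(t-1)-t\log t]$. For $t_k=1-\zeta$ with $|\zeta|=1$ we have $t_k-1=-\zeta$. Thus the first term contributes $\operatorname{Re}[-\zeta\log(-\zeta)]$, which involves only arguments because $|\zeta|=1$. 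The second term is $-\operatorname{Re}[t_k\log t_k]$, where $|t_k|=2\sin(\pi k/d)$.

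I expect that verifying $f(t_k)<0$ for every $k=1,\dots,d-1$ is the crux. One must also take care with branches when $t_k$ is off the real axis, and check that $\Gamma(s-N)$ is not near a pole, which holds since $s-N$ is at distance of order $N$ from the nonpositive integers.

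My first attempt would be the following. Because $t\mapsto \log|K_N(Nt)|/N$ is asymptotically harmonic off $[0,1]$, I would compare directly with $|K_N(s)|=N!/\prod_{j}|s-j|$. Taking logarithms gives the Riemann-sum approximation
\[
\frac1N\log|K_N(Nt)|\approx -\int_0^1\log|t-x|\,dx-1.
\]
This is a logarithmic potential, and one can show it is negative for all $t$ with $\operatorname{Re}t>0$ outside a neighbourhood of $[0,1]$ by checking that $\int_0^1\log|t-x|dx>-1$ there. The value $-1$ is attained exactly on the segment $[0,1]$, since $\int_0^1\log|x-y|\,dx\ge -1$ with equality at the endpoints. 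Moreover, $\int_0^1 \log|t-x|\,dx+1$ is subharmonic, grows like $\log|t|$, and vanishes only near $[0,1]$, so it is strictly positive at each $t_k\notin[0,1]$. If this route runs into difficulty I would fall back on computing $f(t_k)$ explicitly in terms of $\theta_k=2\pi k/d$.

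Setting $A_d:=\tfrac12\min_k\bigl(-f(t_k)\bigr)>0$ and absorbing the polynomial factors $\mathcal{O}(N^{\mathcal{O}(1)})$, we obtain $|\operatorname{Res}_{s=\omega_{k,N}}G_N|=\mathcal{O}(e^{-A_dN})$ for each $k$. Summing the $d-1$ terms gives (\ref{16}).
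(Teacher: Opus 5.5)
\textbf{Where the proposal breaks.} Your skeleton matches the paper's proof:
\begin{itemize}
\item the residue formula $(-1)^{N+1}K_N(\omega_{k,N})/Q_N'(\omega_{k,N})$;
\item the rescaling $s=Nt$, which gives $\omega_{k,N}=Nt_k+\mathcal{O}(1)$ and $|1/Q_N'(\omega_{k,N})|=\mathcal{O}(N)$;
\item the Riemann-sum reduction $\log|K_N(Nt)|=-N\mathcal{I}(t)+\mathcal{O}(\log N)$, where $\mathcal{I}(t)=1+\int_0^1\log|t-x|\,dx$ (your $f$ equals $-\mathcal{I}$).
\end{itemize}
The gap is exactly at the step you call the crux, $\mathcal{I}(t_k)>0$, and the argument you give for it is wrong. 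For $y\in[0,1]$ one has $\int_0^1\log|x-y|\,dx=y\log y+(1-y)\log(1-y)-1\le -1$, not $\ge -1$, with equality only at $y=0,1$. So $\mathcal{I}$ is negative on the open segment; it equals $-\log 2$ at $y=\tfrac12$. By continuity it is also negative on an open neighbourhood of the segment. For instance, $\mathcal{I}(\tfrac12+\tfrac{i}{5})=\tfrac12\log 0.29+\tfrac25\arctan\tfrac52\approx -0.14$.

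Harmonicity off $[0,1]$ and logarithmic growth at infinity do not tell you on which side of the boundary of $\{\mathcal{I}<0\}$ a given point lies. Hence ``$t_k\notin[0,1]$'' does not imply $\mathcal{I}(t_k)>0$. No soft argument of this kind can work uniformly in $d$ either. Since $\mathcal{I}(0)=0$, the point $0$ lies on that boundary, while $t_1=1-e^{2\pi i/d}\to 0$ as $d\to\infty$.

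\textbf{How to close it.} You need the specific location of the $t_k$ on the circle $|t-1|=1$. The paper uses this: for $0<x<1$,
\[
|t_k-x|^2=x^2+2(1-x)(1-\cos\theta_k)>x^2,
\]
hence $\int_0^1\log|t_k-x|\,dx>\int_0^1\log x\,dx=-1$.

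Your fallback would also succeed if carried out. With principal branches,
\[
\mathcal{I}(t_k)=2\sin^2(\theta_k/2)\log\bigl(2\sin(\theta_k/2)\bigr)+\tfrac12(\pi-\theta_k)\sin\theta_k .
\]
For $\theta_k\in[\pi/3,5\pi/3]$ both terms are nonnegative and not both zero. For the remaining angles, divide by $\sin(\theta_k/2)$. The first term becomes $u\log u\ge -1/e$ with $u=2\sin(\theta_k/2)$. The second becomes $(\pi-\theta_k)\cos(\theta_k/2)$, which is positive and exceeds $\pi/\sqrt{3}>1/e$.

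As written, though, this step is only deferred, and your primary route rests on a false claim, so the lemma is not yet proved. Once $\mathcal{I}(t_k)>0$ is established, the rest goes through as in the paper: continuity from $t_k$ to $\omega_{k,N}/N$, then absorbing the $\mathcal{O}(N)$ and $e^{\mathcal{O}(\log N)}$ factors.
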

\textit{Proof.} By Theorem~\ref{thm:poles}, each \(\omega_{j,N}\) is a simple zero of \(Q_N\), i.e., a simple pole of \(\phi_N\). Hence,
\begin{equation}
\operatorname{Res}_{s=\omega_{j,N}}G_N(s)
=
(-1)^{N+1}K_N(\omega_{j,N})
\operatorname{Res}_{s=\omega_{j,N}}\phi_N(s)
=
(-1)^{N+1}
\frac{K_N(\omega_{j,N})}{Q_N'(\omega_{j,N})}.\label{17}
\end{equation}
Set
\begin{equation}
\mathcal{P}_N(s):=Q_N(Ns)
=
1-\prod_{r=0}^{d-1}\left(1-\frac{s}{1-r/N}\right),
\label{17b}
\end{equation}
and
\[
\mathcal{P}(s):=1-(1-s)^d.
\]
Then the (nonzero) zeros of \(\mathcal{P}\) are the numbers \(a_j\), where 
\begin{equation}
a_j:=1-\zeta_j,
\quad \text{and}\quad
\zeta_j:=e^{2\pi i j/d},
\qquad
j=1,\dots,d-1. \label{15c}
\end{equation}
Of course, all zeros \(a_j\) are simple. For each \(j=1,\dots,d-1\), we consider the closed disk
\[
K_j:=\overline{D(a_j,r_j)}
\]
with radius $r_{j}$ sufficiently small, so that the disks \(K_1,\dots,K_{d-1}\) are pairwise disjoint, \(a_j\) is the unique zero
of \(\mathcal{P}\) in \(K_j\), and, $K_j\cap[0,1]=\varnothing$.
Set
\begin{equation}
K:=\bigcup_{j=1}^{d-1}K_j.\label{K}
\end{equation}
Then, \(K\) is compact and $K\cap[0,1]=\varnothing$.
Of course,
\begin{equation}
 \mathcal{P}_N\to\mathcal{P}\quad \text{locally uniformly on each}\quad  \partial K_j,\label{17c}
 \end{equation}
 and as \(a_j\) is the unique zero of
\(\mathcal{P}\) in \(K_j\) (as already mentioned) we have
\[
\eta_j:=\min_{s\in\partial K_j}|\mathcal{P}(s)|>0.
\]
Hence, for all sufficiently large \(N\),
\[
\sup_{s\in\partial K_j}|\mathcal{P}_N(s)-\mathcal{P}(s)|<\eta_j,
\]
so that
\[
|\mathcal{P}_N(s)-\mathcal{P}(s)|<|\mathcal{P}(s)|,
\qquad s\in\partial K_j.
\]
By Rouch\'e's theorem, \(\mathcal{P}_N\) and \(\mathcal{P}\) have the same
number of zeros in \(K_j\). Since \(\mathcal{P}\) has exactly one zero there,
namely \(a_j\), it follows that \(\mathcal{P}_N\) has exactly one zero
\(s_{j,N}\in K_j\). In particular, $s_{j,N}\to a_j$. Using relation (\ref{17b}) the corresponding (nonzero) zeros of \(Q_N\) are
\begin{equation}
\omega_{j,N}:=Ns_{j,N},
\qquad
j=1,\dots,d-1.  \label{18}
\end{equation}
Again, from (\ref{17b}) we have
\[
Q_N'(s)=\frac{1}{N}\,\mathcal{P}_N'\left(\frac{s}{N}\right).
\]
Since \(\mathcal{P}_N\to \mathcal{P}\) locally uniformly in \(\mathbb{C}\), and each \(\mathcal{P}_N\) is entire,
the derivatives \(\mathcal{P}_N'\) converge to \(\mathcal{P}'\) locally uniformly in \(\mathbb{C}\).
In particular, the convergence is uniform on \(K_j\). Since \(s_{j,N}\in K_j\) and \(s_{j,N}\to a_j\), it follows that
\[
\mathcal{P}_N'(s_{j,N})\to \mathcal{P}'(a_j)\neq 0.
\]
Hence, there exists a constant \(c_d>0\) such that, for all sufficiently large \(N\) and every \(j\)
\[
|\mathcal{P}_N'(s_{j,N})|\ge c_d.
\]
Thus, by relation (\ref{18}) we get
\[
|Q_N'(\omega_{j,N})|
\ge
\frac{c_d}{N}.
\]
By invoking (\ref{17}) in the above we have as $N\rightarrow \infty$
\begin{equation}
\operatorname{Res}_{s=\omega_{j,N}}\phi_N(s)=\mathcal{O}(N).
\label{19}
\end{equation}
Since $s_{j,N}\to a_j$ and \(a_j\) is a simple zero of \(\mathcal{P}\), we have
\[
\mathcal{P}(s)=(s-a_j)h_j(s),
\]
where, \(h_j\) is holomorphic in a neighborhood of \(a_j\), and $h_j(a_j)=\mathcal{P}'(a_j)\neq 0.$ From the continuity of \(h_j\), there exist a  \(r_j>0\) and a constant \(A_j>0\) such that $|h_j(s)|\ge A_j,\,\text{for all }|s-a_j|\le r_j.$ Hence, throughout this disk we have
\[
|\mathcal{P}(s)|\ge A_j|s-a_j|.
\]
For each \(r\in\{0,\dots,d-1\}\), we have
\begin{equation}
1-\frac{s}{1-r/N}
=
1-s-\frac{rs}{N}+\mathcal{O}\left(N^{-2}\right),
\label{20}
\end{equation}
uniformly in \(s\) (on the disk $|s-a_j|\le r_j$). Thus, there exists a
constant \(C_j>0\) such that
\begin{equation}
|\mathcal{P}_N(s)-\mathcal{P}(s)|
\le
\frac{C_j}{N},
\qquad
\text{for all } |s-a_j|\le r_j
\label{21}
\end{equation}
and all sufficiently large \(N\). Now, we choose \(B>0\), such that $A_jB>C_j$, and consider the circle $|s-a_j|=\frac{B}{N}.$ This circle is contained in the fixed disk $|s-a_j|\le r_j$, since $\frac{B}{N}\le r_j$, for sufficiently large $N$. Therefore, both estimates (\ref{20}) and (\ref{21}) are valid on that circle. By the choice of \(B\), we get
\begin{equation}
|\mathcal{P}_N(s)-\mathcal{P}(s)|
<
|\mathcal{P}(s)|,
\qquad
\text{for } |s-a_j|=\frac{B}{N}.
\label{22}
\end{equation}
From Rouch\'e's theorem on the circle $|s-a_j|=\frac{B}{N}$ we see that \(\mathcal{P}_N\) and \(\mathcal{P}\) have the same number
of zeros inside the disk $|s-a_j|<\frac{B}{N}.$ Since, \(\mathcal{P}\) has exactly one zero (there), namely \(a_j\), it follows that \(\mathcal{P}_N\) has also exactly one zero there, i.e., \(s_{j,N}\). Therefore,
\[
s_{j,N}=a_j+O(N^{-1}),
\]
and by invoking relations (\ref{18}) and (\ref{15c}) we have as $N \rightarrow \infty$
\begin{equation}
\omega_{j,N}
=
N\left[\left(1-\zeta_j\right)+\mathcal{O}(\frac{1}{N})\right],\qquad j=1,\cdots,d-1.
\label{23}
\end{equation}
Next we will estimate \(K_N(\omega_{j,N})\). Using  (\ref{KN})  and the relation \(\omega_{j,N}=Ns_{j,N}\), we obtain (after taking logarithms of absolute values)
\begin{equation}
\log|K_N(\omega_{j,N})|
=
\log N!-(N+1)\log N
-\sum_{r=0}^{N}\log\left|s_{j,N}-\frac{r}{N}\right|.
\label{24}
\end{equation}
Stirling's formula yields 
\begin{equation}
\log N!=N\log N-N+\mathcal{O}(\log N),\qquad N\rightarrow \infty.
\label{25}
\end{equation}
For \(s\in K\) (of relation (\ref{K})), let us set
\[
f_s(t):=\log|s-t|,
\qquad t\in[0,1].
\]
Since the compact set $K$ is disjoint from \([0,1]\), we have
\[
\delta:=\operatorname{dist}(K,[0,1])>0.
\]
Therefore, for every \(s\in K\) and every \(t\in[0,1]\), $|s-t|\ge \delta$. Hence, \(f_s\) is \(C^1\) on \([0,1]\), and

\begin{equation*}
|f_s'(t)|
=
\frac{1}{|s-t|}
\le
\frac{1}{\delta},
\qquad
\text{uniformly in } s\in K,\ t\in[0,1].
\end{equation*}

Since \(K\) is compact, there exists \(M>0\) such that
\[
|s-t|
\le
M,
\qquad
s\in K,\ t\in[0,1],
\]
hence,
\begin{equation}
|f_s(t)|
=
|\log|s-t||
\le
\max\{|\log\delta|,|\log M|\},
\qquad
\text{uniformly in } s\in K,\ t\in[0,1].\label {26}
\end{equation}
Using relation (\ref{26}) we get
\begin{equation}
\sum_{r=0}^{N}
\log\left|s-\frac{r}{N}\right|
=
N\int_0^1 \log|s-t|\,dt
+
O(1),
\qquad
\text{uniformly in } s\in K.
\label {27}
\end{equation}
As \(s_{j,N}\in K_j\subset K\), relation (\ref{27}) is valid. Thus, by invoking (\ref{27}) and (\ref{25}), in (\ref{24}) we have
\begin{equation}
\log|K_N(\omega_{j,N})|
=
-N\,\mathcal{I}(s_{j,N})
+
O(\log N),
\label {28}
\end{equation}
where
\begin{equation}
\mathcal{I}(s)
:=
1+\int_0^1 \log|s-t|\,dt.
\label {29}
\end{equation}
We will prove that
\begin{equation}
\mathcal{I}(a_j)>0,
\qquad
j=1,\dots,d-1.
\label {30}
\end{equation}
For \(0<t<1\), and for $a_j=1-\zeta_j$ (see, relation (\ref{15c})) we easily have
\[
|a_j-t|^2
=
t^2+2(1-t)(1-\Re\zeta_j).
\]
Since \(\zeta_j\neq 1\), for \(j=1,\dots,d-1\), we have $\Re\zeta_j<1$. Hence, $|a_j-t|>t, \,\, 0<t<1$. Integrating from \(0\) to \(1\) and invoking relation (\ref{29}) we obtain (\ref{30}). Set
\[
m_d
:=
\min_{1\le j\le d-1}\mathcal{I}(a_j)
\]
Then, clearly, $m_d>0$. Let us set
\[
\alpha_d:=\frac12\,m_d.
\]
As $s_{j,N}\to a_j,\,\,j=1,\dots,d-1$, and \(\mathcal{I}\) is continuous, it follows that for all sufficiently large \(N\),
\begin{equation}
\mathcal{I}(s_{j,N})\ge \alpha_d>0,
\qquad
j=1,\dots,d-1.
\label {31}
\end{equation}
Now Lemma~\ref{lem:nonzero} follows immediately by invoking relations (\ref{31}), (\ref{28}), and (\ref{19}), in relation (\ref{17}). $\hfill \blacksquare$\\
Having Lemmas~\ref{lem:res0}--\ref{lem:nonzero}, Theorem~\ref{thm:resdecomp} yields the following
\begin{theorem}  \label{result}
Consider the  $d$ coupon collector's problem (d--CCP), where in each run we keep \textbf{all} the new observed coupons. Let $T_{N,d}$ be the number of trials until all $N$ different types of coupons, which are uniformly distributed, are collected (at least once). Then, as $N\rightarrow \infty$
\begin{equation}
\mathbb {E}\left[T_{N,d}\right]=
\frac{N}{d}\log N
+\frac{\gamma N}{d}
-\frac{d-1}{2d}\log N
+\left(\frac{1}{2}-\frac{d-1}{2d}\gamma\right)
+\mathcal{O}\left(\frac{\log N}{N}\right).
\label{32}
\end{equation}
\end{theorem}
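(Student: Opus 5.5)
The plan is to assemble Theorem~\ref{result} directly from the residue decomposition of Theorem~\ref{thm:resdecomp}. That theorem gives
\[
\mathbb{E}[T_{N,d}]=-\operatorname{Res}_{s=0}G_N(s)-\sum_{j=1}^{d-1}\operatorname{Res}_{s=\omega_{j,N}}G_N(s).
\]
The two pieces are then handled by Lemma~\ref{lem:res0} and Lemma~\ref{lem:nonzero} respectively. Before using the decomposition, I will check that its hypotheses hold for every sufficiently large $N$. The poles of $\phi_N$ must be $0$ and the simple zeros $\omega_{j,N}$ (Theorem~\ref{thm:poles}). For $d$ even there is also the real zero $2N-d+1$, which I take to be one of the $\omega_{j,N}$, namely the one tending to $N(1-\zeta_{d/2})=2N$. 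In every case these must lie off the integers $1,\dots,N$ enclosed by $\mathcal C$, so that $\mathcal C$ can be chosen as in Lemma~\ref{firstlemma}.

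\emph{Checking that no pole hits an integer in $[1,N]$.} The real pole $2N-d+1$ exceeds $N$ for $N\ge d$. The nonreal $\omega_{j,N}$ are not integers at all. This confirms the contour setup.

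First I insert Lemma~\ref{lem:res0}. Negating its statement gives exactly the four displayed terms with error $\mathcal{O}(\log N/N)$:
\[
-\operatorname{Res}_{s=0}G_N(s)=\frac{N}{d}\log N+\frac{\gamma N}{d}-\frac{d-1}{2d}\log N+\Bigl(\frac12-\frac{d-1}{2d}\gamma\Bigr)+\mathcal{O}\Bigl(\frac{\log N}{N}\Bigr).
\]

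Second, Lemma~\ref{lem:nonzero} shows that the finite sum of residues at $\omega_{1,N},\dots,\omega_{d-1,N}$ is $\mathcal{O}(e^{-A_dN})$. The number of terms, $d-1$, is fixed, so the sum really is bounded this way. Since $e^{-A_dN}=o(\log N/N)$, this contribution is absorbed into the error term, and adding the two pieces yields (\ref{32}).

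The only point needing care is that the constants in Lemma~\ref{lem:nonzero} are uniform once $N$ is large. The $\mathcal{O}(N)$ bound on $1/Q_N'(\omega_{j,N})$ and the lower bound $\mathcal{I}(s_{j,N})\ge\alpha_d$ hold for all $N\ge N_0(d)$. Hence $|K_N(\omega_{j,N})|\le e^{-\alpha_dN+\mathcal{O}(\log N)}$, and multiplying by $\mathcal{O}(N)$ still gives exponential decay with, say, $A_d=\alpha_d/2$. With that confirmed, the combination is immediate, and I expect no genuine obstacle beyond this bookkeeping.
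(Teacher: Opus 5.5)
Your proposal is correct and follows the paper's proof exactly. The paper obtains Theorem~\ref{result} by inserting Lemma~\ref{lem:res0} and Lemma~\ref{lem:nonzero} into the residue decomposition of Theorem~\ref{thm:resdecomp}, absorbing the exponentially small nonzero-pole contribution into the $\mathcal{O}(\log N/N)$ error. Your extra checks are sound bookkeeping that the paper leaves implicit: for even $d$ the real pole $2N-d+1$ is the $\omega_{d/2,N}$ near $2N$ and lies beyond $N$, so the contour of Lemma~\ref{firstlemma} can still be chosen even though that lemma's half-line hypothesis fails literally; and the constants in Lemma~\ref{lem:nonzero} are uniform for large $N$.
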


\subsubsection{Full asymptotic expansion of
\(\mathbb{E}[T_{N,d}]\) as \(N\to\infty\)}

\begin{remark}
As we have seen the asymptotic expansion of \(\mathbb{E}[T_{N,d}]\) is completely determined by the residue at
\(s=0\), since the total contribution of the residues at the nonzero
poles \(\omega_{j,N}\) is exponentially small. Starting from relation
\eqref{11}, we now derive the full asymptotic expansion of
\(\mathbb{E}[T_{N,d}]\) as \(N\to\infty\). In particular, only the
simple pole at \(s=0\) contributes to this expansion.
\end{remark}

Before stating the theorem, we introduce the following notation. Let \(d\ge 2\) be a fixed integer, and for each \(N\ge d\) 
\begin{equation}
H_N:=\sum_{k=1}^{N}\frac1k,
\qquad
A_N:=\sum_{j=0}^{d-1}\frac1{N-j},
\qquad
B_N:=\sum_{0\le i<j\le d-1}\frac1{(N-i)(N-j)}.
\label{33a}
\end{equation}
Let us define
\begin{equation}
S_m(d):=\sum_{j=0}^{d-1}j^m,
\qquad
\sigma_m(d):=\frac{S_m(d)}{d},
\label{33}
\end{equation}
for \(m\ge 0\) and $m\ge 1$ respectively.
We consider the following recurrence relation:
\begin{equation}
c_0(d)=1,
\qquad
c_n(d):=-\sum_{m=1}^{n}\sigma_m(d)c_{n-m}(d),
\qquad n\ge 1,
\label{34}
\end{equation}
as well as, the following
\begin{equation}
e_n(d):=\sum_{\ell=0}^{n}c_\ell(d)c_{n-\ell}(d),
\qquad n\ge 0,
\label{35}
\end{equation}
and,
\begin{equation}
U_m(d):=\sum_{0\le i<j\le d-1}\sum_{\ell=0}^{m}i^\ell j^{m-\ell},
\qquad m\ge 0.
\label{36}
\end{equation}
Finally, for each \(m\ge 0\) set
\begin{equation}
\lambda_m(d):=\frac{c_{m+1}(d)}{d},
\label{37}
\end{equation}
and,
\begin{equation}
\nu_m(d):=
\frac1d\sum_{r=0}^{m+1}c_{m+1-r}(d)h_r
+
\frac1{d^2}\sum_{r=0}^{m}U_r(d)e_{m-r}(d).
\label{38}
\end{equation}
We have the following
\begin{theorem}\label{thm:fullI}
The full asymptotic expansion of the average of the random variable $T_{N,d}$ in the $d$ coupon collector's problem (d--CCP) is given by  
\begin{align}
\mathbb {E}\left[T_{N,d}\right]=&\frac{H_N}{A_N}+\frac{B_N}{A_N^2}\nonumber \\
\sim& \frac{N}{d}\log N+\frac{\gamma}{d}N
+\sum_{m=0}^{\infty}\bigl(\lambda_m(d)\log N+\nu_m(d)\bigr)N^{-m},\quad N\rightarrow \infty,
\label{39}
\end{align}
where the coefficients $\lambda_m(d)$ and $\nu_m(d)$ are defined in relations (\ref{33a})-(\ref{38}).
\end{theorem}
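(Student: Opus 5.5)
The plan is to start from the residue decomposition of Theorem~\ref{thm:resdecomp}, combined with relation~(\ref{11}) and Lemma~\ref{lem:nonzero}. These give
\[
\mathbb{E}[T_{N,d}]=\frac{H_N}{A_N}+\frac{B_N}{A_N^2}+\mathcal{O}\left(e^{-A_dN}\right),
\]
and the exponentially small term is invisible in any asymptotic scale $N^{-m}\log N$, $N^{-m}$. So the first equality in (\ref{39}) should be read modulo this exponentially small error, and the whole task reduces to expanding the two explicit quantities $H_N/A_N$ and $B_N/A_N^2$ in powers of $N^{-1}$, with one extra $\log N$ coming from $H_N$. Throughout, I take $h_r$ to denote the Euler--Maclaurin coefficients of $H_N$:
\[
H_N\sim\log N+\sum_{r\ge0}h_rN^{-r},\qquad h_0=\gamma,\quad h_1=\tfrac12,\quad h_r=-\tfrac{B_r}{r}\ (r\ge2),
\]
where $B_r$ are the Bernoulli numbers. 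This extends (\ref{13}).

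\textbf{Step 1: expanding $1/A_N$.} For $N>d-1$ the geometric series converges, so
\[
A_N=\frac1N\sum_{j=0}^{d-1}\frac{1}{1-j/N}=\frac{d}{N}\sum_{m\ge0}\sigma_m(d)N^{-m},\qquad \sigma_0=1,
\]
and this is an honest convergent expansion. Inverting the power series $\sum_m\sigma_mx^m$ with $x=1/N$ gives $\sum_n c_n(d)x^n$, and the recurrence (\ref{34}) is exactly the statement that the Cauchy product of the two series is $1$. Hence
\[
\frac1{A_N}=\frac Nd\sum_{n\ge0}c_n(d)N^{-n},\qquad \frac1{A_N^2}=\frac{N^2}{d^2}\sum_{n\ge0}e_n(d)N^{-n},
\]
with $e_n$ as in (\ref{35}). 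Both series converge for $N$ large, since the inverse of an analytic function that is nonvanishing at $0$ is analytic near $0$. As a sanity check, this reproduces (\ref{12}).

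\textbf{Step 2: expanding $B_N$.} Similarly, for $N$ large,
\[
B_N=N^{-2}\sum_{0\le i<j\le d-1}\frac{1}{(1-i/N)(1-j/N)}=N^{-2}\sum_{m\ge0}U_m(d)N^{-m}.
\]
This holds because the coefficient of $x^m$ in $(1-ix)^{-1}(1-jx)^{-1}$ is $\sum_{\ell}i^\ell j^{m-\ell}$, which matches (\ref{36}). Multiplying by the expansion of $1/A_N^2$ gives
\[
\frac{B_N}{A_N^2}=\frac1{d^2}\sum_{m\ge0}\Big(\sum_{r=0}^mU_r(d)e_{m-r}(d)\Big)N^{-m},
\]
which is the second part of $\nu_m$ in (\ref{38}).

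\textbf{Step 3: collecting terms.} Multiply the asymptotic series of $H_N$ by $\frac Nd\sum_nc_nN^{-n}$.
\begin{itemize}
\item The $\log N$ part gives $\frac Nd\log N\sum_nc_nN^{-n}$. The $n=0$ term is $\frac Nd\log N$, and the term with index $n=m+1$ gives $\lambda_m(d)N^{-m}\log N$, as in (\ref{37}).
\item The non-log part gives $\frac Nd\sum_{k}\big(\sum_{r=0}^kc_{k-r}h_r\big)N^{-k}$. The $k=0$ term is $\gamma N/d$, and $k=m+1$ gives the first part of $\nu_m$.
\end{itemize}
Adding the contribution of Step~2 yields (\ref{39}). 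As a consistency check, for $m=0$ this should recover Theorem~\ref{result}: $\lambda_0=c_1/d=-\sigma_1/d=-(d-1)/(2d)$, and $\nu_0=\frac1d(c_1\gamma+\frac12)+\frac{U_0}{d^2}$ with $U_0=\binom d2$, so $\nu_0=\frac12-\frac{d-1}{2d}\gamma$.

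\textbf{Main obstacle.} I expect the only delicate point to be the justification that products of these series are legitimate asymptotic expansions with controlled remainders. The $1/A_N$ and $B_N$ series converge, but the series for $H_N$ is only asymptotic and divergent. The argument I would use is as follows. Truncate the Euler--Maclaurin expansion of $H_N$ at order $M$, with remainder $\mathcal{O}(N^{-M-1})$. Truncate the convergent series at the same order, with remainder also $\mathcal{O}(N^{-M-1})$ uniformly for large $N$. Multiplying out, the product of $\frac Nd\cdot(\log N+\dots)$ with the remainders leaves an error $\mathcal{O}(N^{-M}\log N)$. This establishes the expansion to every finite order, which is precisely the meaning of $\sim$ in (\ref{39}).
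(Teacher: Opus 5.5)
Your proposal is correct and follows the paper's proof essentially step by step. You expand $1/A_N$ through the recurrence for $c_n(d)$ and $1/A_N^2$ through $e_n(d)$, expand $B_N$ through $U_m(d)$, multiply by the Euler--Maclaurin series of $H_N$, and collect the $N^{-m}\log N$ and $N^{-m}$ terms.

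Your additions are also sound. The first equality in (\ref{39}) holds only up to the exponentially small nonzero-pole residues of Lemma~\ref{lem:nonzero}; already for $N=d=2$ one has $\mathbb{E}[T_{2,2}]=1$ but $H_2/A_2+B_2/A_2^2=11/9$. The undefined $h_r$ in (\ref{38}) are exactly the Euler--Maclaurin coefficients that the paper's proof calls $u_r$. Finally, your explicit $\mathcal{O}(N^{-M}\log N)$ remainder bookkeeping supplies the justification the paper leaves implicit.
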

\textbf{Proof.} See, Appendix.\\

\section {Problem II - $\mathbb{E}[T_{N,1}]$}

\begin{theorem}  \label{theorem12}
Consider the  $d$ coupon collector's problem (d--CCP), where in each run we keep \textbf{the least-collected} coupon so far. Let $T_{N,1}$ be the number of trials until all $N$ different types of coupons, which are uniformly distributed, are collected (at least once). Then,
\begin{equation}
\mathbb{E}[T_{N,1}]
=
\sum_{k=0}^{N-1}
\frac{1}{1-\binom{k}{d}/\binom{N}{d}},
\label{80}
\end{equation}
where $\binom{k}{d}=0,$ if $k<d$.
\end{theorem}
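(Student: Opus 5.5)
We decompose $T_{N,1}$ according to the number of distinct coupon types already collected. The key observation is how the least-collected rule acts on a single run. The collector receives $d$ distinct coupons. If at least one of them is of a type not yet collected, its count is $0$, which is minimal, so the collector keeps a new type. Ties among several new types do not matter, since any choice increases the number of distinct collected types by exactly one. If all $d$ coupons are of already collected types, the number of distinct types does not change. Hence the number of distinct types grows by at most one per run, and it grows exactly when the drawn $d$-subset is not contained in the set of already collected types.

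Let $X_k$ be the number of runs spent while exactly $k$ distinct types are held, for $k=0,1,\dots,N-1$. Then
\[
T_{N,1}=\sum_{k=0}^{N-1}X_k .
\]
Given that $k$ types are held, each run draws a uniformly random $d$-subset of the $N$ types, independently of the past. The run fails to produce a new type exactly when the $d$-subset lies inside the held $k$-set, which has probability $\binom{k}{d}/\binom{N}{d}$, and this is $0$ when $k<d$. This probability depends only on $k$ and not on the multiplicities of the held coupons, so the process of distinct-type counts is a pure-birth Markov chain. Consequently $X_k$ is geometric on $\{1,2,\dots\}$ with success probability $p_k=1-\binom{k}{d}/\binom{N}{d}>0$ for $k\le N-1$.

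Using $\mathbb{E}[X_k]=1/p_k$ and linearity of expectation gives (\ref{80}) immediately.

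The only point needing care, and the main potential obstacle, is justifying that the retention rule never discards a new type. This requires interpreting ``least-collected'' as comparing the current counts of the $d$ drawn types, with new types having count zero. Once this is granted, the multiplicity bookkeeping (which the rule genuinely affects) is irrelevant to the stopping time. No independence of the $X_k$ is needed for the mean.
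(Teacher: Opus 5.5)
Your proof is correct and is essentially the paper's approach. The paper only defers this statement to the literature, but in Section~4 (relations~(\ref{pj}) and~(\ref{tn})) it uses exactly your decomposition into independent geometric phases with parameter $p_{j,N}=1-\binom{N-j}{d}/\binom{N}{d}$, where $j=N-k$ is the number of missing types, and summing $1/p_{j,N}$ gives~(\ref{80}).
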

\textbf{Proof}. See, \cite{SH}.$\hfill \blacksquare$
\begin{remark}
Again, the result of Theorem~\ref{theorem12} may also be obtained by formulating the problem as a finite Markov chain and applying standard potential theory for hitting times (see, e.g., \cite{N}).
\end{remark}

\subsection{Asymptotics}
We first derive the first four terms of the asymptotic expansion of \(\mathbb{E}[T_{N,1}]\), and then obtain its full asymptotic
expansion as \(N\to\infty\). We have
\begin{align*}
\mathbb{E}[T_{N,1}]
&=\sum_{j=1}^{N}\frac{1}{1-\binom{N-j}{d}/\binom{N}{d}}\\
&=\sum_{j=1}^{N}\Bigg(1-\prod_{r=0}^{d-1}\left(1-\frac{x}{1-r/N}\right)\Bigg)^{-1},
\end{align*}
where we have set $x:=\frac{j}{N}$. Now as $N \rightarrow \infty$
\begin{equation}
\prod_{r=0}^{d-1}
\left(1-\frac{x}{1-r/N}\right)
=
(1-x)^d
-
\frac{d(d-1)}{2N}\,
x(1-x)^{d-1}
+
\mathcal{O}(N^{-2}).
\label{90}
\end{equation}
Thus,
\begin{equation}
\Bigg(
1-\prod_{r=0}^{d-1}\left(1-\frac{x}{1-r/N}\right)
\Bigg)^{-1}
=
\frac{1}{1-(1-x)^d}
-
\frac{d(d-1)}{2N}\,
\frac{x(1-x)^{d-1}}{(1-(1-x)^d)^2}
+
\mathcal{O}(N^{-2}).
\label{91}
\end{equation}
As $x\rightarrow 0$ we have
\[
1-(1-x)^d
=d x+\mathcal{O}(x^2).
\]
Hence,
\begin{equation}
\frac{1}{1-(1-x)^d}
=
\frac{1}{dx}
+
\left(
\frac{1}{1-(1-x)^d}-\frac{1}{dx}
\right),
\label{92}
\end{equation}
as well as,
\begin{equation}
-\frac{d(d-1)}{2}\,
\frac{x(1-x)^{d-1}}{(1-(1-x)^d)^2}
=
-\frac{d-1}{2d}\,\frac{1}{x}
+
q(x),
\label{93}
\end{equation}
where
\begin{equation}
q(x)
:=
-\frac{d(d-1)}{2}\,
\frac{x(1-x)^{d-1}}{(1-(1-x)^d)^2}
+
\frac{d-1}{2d}\,\frac{1}{x}.
\label{94}
\end{equation}
Notice that $q(\cdot)$ extends smoothly to \([0,1]\). By invoking relations (\ref{92})-(\ref{94}) in (\ref{91}) we get
\begin{equation*}
\Bigg(1-\prod_{r=0}^{d-1}\left(1-\frac{x}{1-r/N}\right)
\Bigg)^{-1}
=
\frac{1}{dx}
+
\left(
\frac{1}{1-(1-x)^d}-\frac{1}{dx}
\right)
-
\frac{d-1}{2dN}\,\frac{1}{x}
+
\frac{1}{N}q(x)
+
\mathcal{O}\!\left(\frac{1}{N^2x}\right).
\end{equation*}
Since \(x=j/N\) the above becomes
\begin{equation}
\Bigg(1-\prod_{r=0}^{d-1}\left(1-\frac{x}{1-r/N}\right)
\Bigg)^{-1}
=
\frac{N}{dj}
+
\left(
\frac{1}{1-(1-j/N)^d}-\frac{N}{dj}
\right)
-
\frac{d-1}{2d}\,\frac{1}{j}
+
\frac{1}{N}q(j/N)
+
\mathcal{O}\!\left(\frac{1}{Nj}\right).
\label{95}
\end{equation}
Summing the above over \(j=1,\dots,N\) we get
\begin{equation}
\mathbb{E}[T_{N,1}]
=
\left(
\frac{N}{d}-\frac{d-1}{2d}
\right)
H_N
+
\sum_{j=1}^{N}
\left(
\frac{1}{1-(1-j/N)^d}-\frac{N}{dj}
\right)
+
\frac{1}{N}\sum_{j=1}^{N}q(j/N)
+
\mathcal{O}\!\left(\frac{\log N}{N}\right).
\label{96}
\end{equation}
The asymptotics of the quantity $H_N$ are known as we have already mentioned in relation (\ref{13}) via the  Euler--Maclaurin summation formula. Now, consider the function $u(x):=\frac{1}{1-(1-x)^d}-\frac{1}{dx}$, which also extends smoothly to \([0,1]\). Again, the Euler--Maclaurin summation formula yields as $N \rightarrow \infty$
\begin{equation}
\sum_{j=1}^{N}
\left(
\frac{1}{1-(1-j/N)^d}-\frac{N}{dj}
\right)
=
N\int_0^1
\left(
\frac{1}{1-(1-x)^d}-\frac{1}{dx}
\right)\,dx
+
\frac{d-1}{4d}
+
\mathcal{O}(N^{-1}).
\label{97}
\end{equation}
Similarly,
\begin{equation}
\frac{1}{N}\sum_{j=1}^{N}q(j/N)
=
\int_0^1 q(x)\,dx
+
O(N^{-1}).
\label{98}
\end{equation}
For the integral of the above we apply integration by parts. We get
\begin{equation}
\int_0^1 q(x)\,dx
=
\frac{(d-1)^2}{2d}
-
\frac{d-1}{2}
\int_0^1
\left(
\frac{1}{1-(1-x)^d}-\frac{1}{dx}
\right)\,dx.
\label{99}
\end{equation}
By invoking relations (\ref{13}), (\ref{97}), (\ref{98}), and, (\ref{99}), in relation (\ref{96}) we have the following
\begin{theorem}  \label{result2}
Consider the  $d$ coupon collector's problem (d--CCP), where in each run we keep \textbf{the least} collected coupon so far. Let $T_{N,1}$ be the number of trials until all $N$ different types of coupons, which are uniformly distributed, are collected (at least once). Then, as $N\rightarrow \infty$
\begin{equation}
\mathbb{E}[T_{N,1}]=
\frac{N}{d}\log N
+\left(\frac{\gamma}{d}+
C_d\right)N
-
\frac{d-1}{2d}\log N
+
D_d
+
\mathcal{O}\!\left(\frac{\log N}{N}\right),
\label{102}
\end{equation}
where,
\begin{equation}
C_d:
=
\int_0^1
\left(
\frac{1}{1-(1-x)^d}-\frac{1}{dx}
\right)\,dx,
\label{0100}
\end{equation}
and
\begin{equation}
D_d:
=
\frac{2d^2-3d+3-2\gamma \left(d-1\right)}{4d}
-\frac{d-1}{2}C_d.
\label{101}
\end{equation}
\end{theorem}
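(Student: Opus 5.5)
The plan is to finish the computation started in (\ref{96}) by evaluating each of its three pieces up to $\mathcal{O}(\log N/N)$, and then to collect the constant terms to identify $D_d$. I will also make sure the error term in (\ref{96}) is legitimate. The first step is to check that the remainder in (\ref{95}) is $\mathcal{O}(1/(Nj))$ \emph{uniformly} in $1\le j\le N$. For this I will write
\[
\prod_{r=0}^{d-1}\Bigl(1-\tfrac{x}{1-r/N}\Bigr)=(1-x)^d-\tfrac{d(d-1)}{2N}x(1-x)^{d-1}+R_N(x),\qquad |R_N(x)|\le Cx/N^2 \ \text{on } [0,1].
\]
The factor $x$ appears because each factor equals $1$ at $x=0$. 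The denominator $1-(1-x)^d$ is at least $c\,x$ on $[0,1]$. Expanding the reciprocal then gives a remainder $\mathcal{O}(x^{2}/(N^2x^{2}))=\mathcal{O}(1/N^2)$ away from $0$ and $\mathcal{O}(1/(N^2x))$ near $0$. Summing over $j$ contributes $\mathcal{O}(H_N/N)=\mathcal{O}(\log N/N)$, as claimed in (\ref{96}).

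Next comes the Euler--Maclaurin step (\ref{97}) for $u(x)=\frac{1}{1-(1-x)^d}-\frac{1}{dx}$, which is smooth on $[0,1]$. It gives $\sum_{j=1}^N u(j/N)=N\int_0^1u+\tfrac12(u(1)-u(0))+\mathcal{O}(N^{-1})$. From $1-(1-x)^d=dx-\frac{d(d-1)}{2}x^2+\cdots$ we get $u(0)=\frac{d-1}{2d}$, and directly $u(1)=1-\frac1d$. Hence the boundary term is $\frac{d-1}{4d}$, and the integral is $NC_d$.

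For (\ref{98})--(\ref{99}) I will use a Riemann-sum estimate for the smooth function $q$. To compute $\int_0^1 q$, note that with $g(x)=1/(1-(1-x)^d)$ we have $g'(x)=-d(1-x)^{d-1}/(1-(1-x)^d)^2$, and therefore $xu'(x)=xg'(x)+\frac{1}{dx}$. Comparing with (\ref{94}) shows that $q(x)=\frac{d-1}{2}\,x\,u'(x)$. Integration by parts then gives
\[
\int_0^1 q=\frac{d-1}{2}\Bigl(u(1)-\int_0^1u\Bigr)=\frac{(d-1)^2}{2d}-\frac{d-1}{2}C_d,
\]
which is (\ref{99}).

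Finally I will insert (\ref{13}) into $\bigl(\frac Nd-\frac{d-1}{2d}\bigr)H_N$. This produces $\frac Nd\log N+\frac{\gamma N}{d}-\frac{d-1}{2d}\log N+\frac{1}{2d}-\frac{(d-1)\gamma}{2d}+\mathcal{O}(\log N/N)$. Adding $NC_d+\frac{d-1}{4d}+\frac{(d-1)^2}{2d}-\frac{d-1}{2}C_d$, the constant term is
\[
\frac{2+(d-1)+2(d-1)^2-2\gamma(d-1)}{4d}-\frac{d-1}{2}C_d=\frac{2d^2-3d+3-2\gamma(d-1)}{4d}-\frac{d-1}{2}C_d=D_d,
\]
which gives (\ref{102}). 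The only genuinely delicate point is the uniform control of the $\mathcal{O}(N^{-2})$ remainder near $x=0$ in the first step. The rest is bookkeeping of constants, where the $u(0)$ boundary term is the easiest to get wrong.
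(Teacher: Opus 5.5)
Your proposal is correct and follows the paper's own route: the decomposition (\ref{96}), Euler--Maclaurin for $u$ with boundary term $\frac{u(1)-u(0)}{2}=\frac{d-1}{4d}$, a Riemann-sum estimate for $q$, integration by parts for $\int_0^1 q$, and collecting the constants into $D_d$. You also fill in steps the paper only asserts. These are the uniform $\mathcal{O}(1/(Nj))$ remainder bound obtained from $R_N(x)=\mathcal{O}(x/N^2)$, and the identity $q(x)=\frac{d-1}{2}\,x\,u'(x)$, which makes the paper's unexplained integration by parts transparent.
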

\begin{remark}
It is an easy exercise for one to check that the integral $C_d$ of relation ({\ref{0100}}) is always finite, positive, and its value depends only on the positive integer $d>1$. In particular $0<C_d<1$, and 
\begin{equation*}
C_d = -\frac{\psi\left(\frac{1}{d}\right)+\ln d+\gamma}{d},
\end{equation*}
where $\psi(\cdot)$ is the digamma function, i.e., the logarithmic derivative of the gamma function. The exact values of $C_d$, for $d=2,3,4,5,$ and, $6$ are:
\[
\begin{aligned}
C_2
&= \frac{\ln 2}{2}, \\[4pt]
C_3
&= \frac{\pi}{6\sqrt{3}}+\frac{\ln 3}{6}, \\[4pt]
C_4
&= \frac{\pi}{8}+\frac{\ln 2}{4}, \\[4pt]
C_5
&= \frac{1}{5}\left[
\ln 2
+\frac{\pi}{2}\sqrt{\frac{5+2\sqrt{5}}{5}}
+\frac{\sqrt{5}+1}{2}
 \ln\left(\sin\frac{2\pi}{5}\right)
-\frac{\sqrt{5}-1}{2}
 \ln\left(\sin\frac{\pi}{5}\right)
\right], \\[4pt]
C_6
&= \frac{\pi\sqrt{3}}{12}
+\frac{\ln 2}{6}
+\frac{\ln 3}{12}.
\end{aligned}
\]
\end{remark}
 \begin{figure}[h!]
  \centering
  \includegraphics[width=0.72\textwidth]{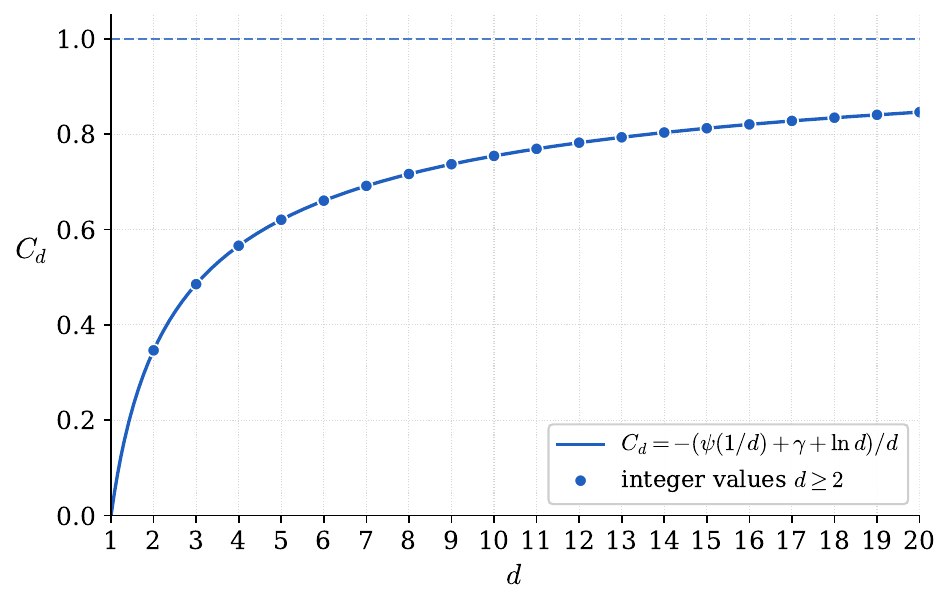}
  \caption{Graph of $C_d$ as $d$ increases.}\label{fig:Cd}
\end{figure} 

\begingroup
\begin{remark}\label{rem:Cdmono}
The constant $C_d$ admits a transparent probabilistic reading. It is the asymptotic
\emph{per-unit-$N$ penalty} paid by the least-collected rule (Problem~II) relative to the
keep-all rule (Problem~I): both means share the term $\frac Nd\log N+\frac\gamma dN$, and
Problem~II carries the additional $NC_d$. From the closed form
$C_d=-(\psi(1/d)+\gamma+\ln d)/d$ one checks that $d\mapsto C_d$ is strictly increasing with
$C_1=0$ and $C_d\uparrow 1$ as $d\to\infty$ (Figure~\ref{fig:Cd}). The monotonicity reflects
that, as the draw size $d$ grows, the least-collected rule increasingly ``wastes'' the
information in the $d$ sampled types by committing to a single one, so the penalty saturates;
the ceiling $C_d<1$ shows that this penalty never exceeds one extra unit of $N$ in the leading
linear order, however large $d$ is.

Moreover, the rate at which $C_d$ approaches its ceiling can be made precise. Using the
expansion of the digamma function about the origin, $\psi(z)=-\frac1z-\gamma+\sum_{k\ge1}(-1)^{k+1}\zeta(k+1)\,z^{k}$
for $|z|<1$, with $z=1/d$, the closed form $C_d=-(\psi(1/d)+\gamma+\ln d)/d$ yields the
asymptotic expansion
\[
C_d=1-\frac{\ln d}{d}-\sum_{k\ge 2}\frac{(-1)^{k}\zeta(k)}{d^{\,k}}
   =1-\frac{\ln d}{d}-\frac{\zeta(2)}{d^{2}}+\frac{\zeta(3)}{d^{3}}-\frac{\zeta(4)}{d^{4}}+\cdots,
\qquad d\to\infty,
\]
where $\zeta(\cdot)$ is the Riemann zeta function and $\zeta(2)=\pi^2/6$. In particular
$1-C_d\sim (\ln d)/d$, so the penalty closes its gap to the ceiling only logarithmically slowly.
\end{remark}
\endgroup

\begingroup
\subsection{Numerical validation of the expansions}
To illustrate the accuracy of the four-term expansions of Theorems~\ref{result}
and~\ref{result2}, Table~\ref{tab:valid} compares them against the exact values from
Theorems~\ref{theorem1} and~\ref{theorem12}, computed in exact rational arithmetic. Already for
moderate $N$ the relative error is small, and it decays at the predicted rate
$\mathcal{O}(\log N/N)$.
\begin{table}[h!]
\centering
\renewcommand{\arraystretch}{1.2}
\begin{tabular}{c|c|rrr|rrr}
 & & \multicolumn{3}{c|}{\textbf{Problem I}\ \ $\mathbb{E}[T_{N,d}]$}
   & \multicolumn{3}{c}{\textbf{Problem II}\ \ $\mathbb{E}[T_{N,1}]$}\\
$d$ & $N$ & exact & 4-term & rel.\ err & exact & 4-term & rel.\ err\\\hline
$2$ & $10$  & $14.1234$  & $14.1791$  & $3.9\times10^{-3}$ & $17.5558$  & $17.5965$  & $2.3\times10^{-3}$\\
$2$ & $50$  & $111.594$  & $111.609$  & $1.3\times10^{-4}$ & $128.877$  & $128.889$  & $9.3\times10^{-5}$\\
$2$ & $100$ & $258.316$  & $258.324$  & $3.2\times10^{-5}$ & $292.926$  & $292.933$  & $2.3\times10^{-5}$\\
$2$ & $500$ & $1696.76$  & $1696.76$  & $1.2\times10^{-6}$ & $1869.99$  & $1870.00$  & $9.5\times10^{-7}$\\\hline
$3$ & $10$  & $9.0462$    & $9.1394$   & $1.0\times10^{-2}$ & $13.9539$  & $14.0080$  & $3.9\times10^{-3}$\\
$3$ & $50$  & $73.7999$  & $73.8242$  & $3.3\times10^{-4}$ & $98.0914$  & $98.1089$  & $1.8\times10^{-4}$\\
$3$ & $100$ & $171.505$  & $171.519$  & $7.9\times10^{-5}$ & $220.063$  & $220.074$  & $4.7\times10^{-5}$\\
$3$ & $500$ & $1130.20$  & $1130.21$  & $3.0\times10^{-6}$ & $1372.92$  & $1372.92$  & $2.0\times10^{-6}$
\end{tabular}
\caption{Exact means versus the four-term asymptotic expansions of Theorems~\ref{result}
and~\ref{result2}, with relative errors.}
\label{tab:valid}
\end{table}
\endgroup

\begin{theorem}\label{thm:fullII}
The full asymptotic expansion of the expectation of the random variable $T_{N,1}$ (of Theorem \ref{result2}) regarding the $d$ coupon collector problem, where we draw $d$ coupons each time, but keep the \textbf{least} collected coupon so far, is (as $N \rightarrow \infty$)
\[
\mathbb{E}[T_{N,1}]
\sim
\frac{N}{d}\log N
+\left(\frac{\gamma}{d}+
C_d\right)N
+
\sum_{j=0}^{\infty}
\bigl(\alpha_{j+1}(d)\log N+u_j(d)\bigr)N^{-j},
\]
where,
\begin{equation*}
C_d:
=
\int_0^1
\left(
\frac{1}{1-(1-x)^d}-\frac{1}{dx}
\right)\,dx,
\end{equation*}
\[
u_j(d)
=
\gamma\,a_{j+1}(d)
+
\int_0^1 h_{j+1}(x;d)\,dx
+
\frac{a_j(d)+h_j(1;d)-h_j(0;d)}{2}
\]
\[
\qquad
-
\sum_{r=1}^{\lfloor (j+1)/2\rfloor}
\frac{B_{2r}}{2r}\,a_{j+1-2r}(d)
+
\sum_{r=1}^{\lfloor (j+1)/2\rfloor}
\frac{B_{2r}}{(2r)!}
\bigl(
h_{j+1-2r}^{(2r-1)}(1;d)-h_{j+1-2r}^{(2r-1)}(0;d)
\bigr),
\]
and, as usual, $\lfloor x\rfloor$ denotes the greatest integer less than or equal to $x$. The quantities $\alpha_j$ and $h_j$ are defined in the Appendix in relations (\ref{a3}) and (\ref{a4}).
\end{theorem}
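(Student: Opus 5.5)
The plan is to make the heuristic computation leading to Theorem~\ref{result2} exact to all orders. Start from the representation
\[
\mathbb{E}[T_{N,1}]=\sum_{j=1}^{N}F_N(j/N),\qquad F_N(x):=\Bigl(1-\prod_{r=0}^{d-1}\bigl(1-\tfrac{x}{1-r/N}\bigr)\Bigr)^{-1},
\]
and expand $F_N(x)$ in powers of $1/N$, uniformly for $x\in(0,1]$. Writing $(1-r/N)^{-1}=\sum_{k\ge0}(r/N)^k$, the product becomes $(1-x)^d-\sum_{k\ge1}N^{-k}P_k(x)$. Each $P_k$ is a polynomial vanishing at $x=0$, since every correction term contains a factor $x$. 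Inverting the resulting series about $1-(1-x)^d$ gives
\[
F_N(x)\sim\sum_{k\ge0}N^{-k}g_k(x).
\]
Each $g_k$ has at worst a simple pole at $x=0$, because the denominator is $\sim dx$ and the numerators vanish to at least first order. So one can write
\[
g_k(x)=\frac{a_k(d)}{x}+h_k(x;d),
\]
with $h_k$ smooth on $[0,1]$. For example, $a_0=1/d$, $a_1=-(d-1)/(2d)$, $h_0=u$ and $h_1=q$, consistent with \eqref{92}--\eqref{94}. These would be the quantities \eqref{a3}, \eqref{a4} of the Appendix; the coefficients $\alpha_{j}$ of the $\log N$ terms should be $a_j(d)$ itself.

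Truncating after $K$ terms, the remainder must be controlled as $\mathcal{O}(N^{-K-1}/x)$ uniformly on $(0,1]$. This holds because $1-\prod(\cdot)\ge c\,x$ there for $N$ large. The remainder then sums to $\mathcal{O}(N^{-K}\log N)$.

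Substituting $x=j/N$, the $k$-th term contributes
\[
N^{-k}\Bigl(a_k N H_N+\sum_{j=1}^N h_k(j/N)\Bigr).
\]
For the first piece, insert the Euler--Maclaurin expansion
\[
H_N\sim\log N+\gamma+\frac{1}{2N}-\sum_{r\ge1}\frac{B_{2r}}{2r}N^{-2r}.
\]
For the second piece, apply Euler--Maclaurin to the smooth $h_k$ on $[0,1]$ with step $1/N$. Since the sum runs over $j=1,\dots,N$,
\[
\sum_{j=1}^N h_k(j/N)\sim N\int_0^1h_k+\frac{h_k(1)-h_k(0)}{2}+\sum_{r\ge1}\frac{B_{2r}}{(2r)!}N^{1-2r}\bigl(h_k^{(2r-1)}(1)-h_k^{(2r-1)}(0)\bigr).
\]

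Next, collect the coefficient of $N^{-j}\log N$ and of $N^{-j}$.

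\begin{itemize}
\item From the harmonic piece, the coefficient of $N^{-j}\log N$ is $a_{j+1}$. The constant-in-$\log$ contributions are $\gamma a_{j+1}$ (from $k=j+1$), $a_j/2$ (from $k=j$ via $1/(2N)$), and $-\tfrac{B_{2r}}{2r}a_{j+1-2r}$.
\item From the smooth piece, the contributions are $\int_0^1 h_{j+1}$, $\tfrac{h_j(1)-h_j(0)}{2}$, and the Bernoulli terms with index $j+1-2r$.
\item The $k=0$ terms give the leading part $\tfrac{N}{d}\log N+\bigl(\tfrac{\gamma}{d}+C_d\bigr)N$, since $\int_0^1 h_0=C_d$.
\end{itemize}

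This reproduces exactly the stated formula for $u_j(d)$. As a consistency check, $j=0$ should recover $D_d$ of \eqref{101}, using \eqref{99}.

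The main obstacle is justifying uniformity. The $1/N$ expansion of $F_N$ is not uniform near $x=0$ in an absolute sense, only relative to $1/x$, and the Euler--Maclaurin remainders for $h_k$ require uniform bounds on derivatives. I would handle this by first proving that each $h_k$ extends analytically to a neighbourhood of $[0,1]$. The apparent singularities come only from zeros of $1-(1-x)^d$, which are $x=0$ and the points $a_j$ off $[0,1]$. Having done that, I would bound the truncation error through the exact identity $F_N-\sum_{k\le K}N^{-k}g_k$ expressed as a rational function with denominator $\ge c\,x$.
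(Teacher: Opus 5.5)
Your proposal is correct and follows essentially the same route as the paper's Appendix proof. Your $g_k=a_k/x+h_k$ is exactly the paper's $b_k(x;d)/x$ with $b_k=a_k+x\,h_k$, and your remainder bound via $1-\prod(\cdot)\ge cx$ is the paper's observation that $S_N(x)=(1-\prod(\cdot))/x$ is bounded below by $m(d)/2$ on $[0,1]$; dividing by $x$ first in this way lets you invert the uniform expansion of $S_N$ directly instead of handling the combined rational remainder. The subsequent Euler--Maclaurin steps and the collection of terms (including your reading $\alpha_j=a_j(d)$ and the check $u_0=D_d$) coincide with the paper's.
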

\textbf{Proof.} See, Appendix.
\begin{remark}\label{rem:d3}
It is not hard to check that the logarithmic terms in the asymptotics of Theorems~\ref{thm:fullI} and~\ref{thm:fullII} are identical, i.e., the coefficients $\lambda_{m}(d)=\alpha_{m+1}(d),\,\,m=0,1,\cdots$. For example, for $d=3$ one has
\[
\mathbb{E}[T_{N,3}]
=
\frac{N}{3}\log N
+
\frac{\gamma}{3}N
-
\frac{1}{3}\log N
+
\left(\frac12-\frac{\gamma}{3}\right)
-
\frac{2}{9}\frac{\log N}{N}
-
\left(\frac{2\gamma}{9}+\frac{7}{36}\right)\frac1N
+
O\!\left(\frac{\log N}{N^2}\right),
\]

\[
\mathbb{E}[T_{N,1}]
=
\frac{N}{3}\log N
+
\left(\frac{\gamma}{3}+C_3\right)N
-
\frac{1}{3}\log N
+
\left(1-\frac{\gamma}{3}-C_3\right)
-
\frac{2}{9}\frac{\log N}{N}
+
\left(\frac{7}{27}-\frac{\log 3}{9}-\frac{2\gamma}{9}\right)\frac1N
+
O\!\left(\frac{\log N}{N^2}\right),
\]
where
\[
C_3
:=
\int_0^1
\left(
\frac{1}{1-(1-x)^3}-\frac{1}{3x}
\right)\,dx
=
\frac{\log 3}{6}
+
\frac{\pi}{6\sqrt{3}}.
\]

\end{remark}

\begingroup
\begin{corollary}\label{cor:d1}
For $d=1$ both problems reduce to the classical coupon collector's problem, and Theorems~\ref{result}, \ref{result2}, \ref{thm:fullI}, and~\ref{thm:fullII} collapse to the well-known expansion of the mean,
\[
\mathbb{E}\left[T_{N,1}\right]\Big|_{d=1}
=\mathbb{E}\left[T_{N,d}\right]\Big|_{d=1}
=N H_N
=N\log N+\gamma N+\frac{1}{2}+\mathcal{O}\!\left(\frac1N\right),
\qquad N\to\infty.
\]
\end{corollary}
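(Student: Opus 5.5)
The plan is to verify the collapse at $d=1$ directly from the exact formulas, and only then confirm that each asymptotic theorem specializes correctly.

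\textbf{Exact means.} For $d=1$ we have $\binom{N-k}{1}/\binom{N}{1}=1-k/N$, so in Theorem~\ref{theorem1}
\[
\binom{N}{1}-\binom{N-k}{1}=k .
\]
Hence (\ref{1}) becomes
\[
N\sum_{k=1}^{N}(-1)^{k+1}\binom{N}{k}\frac1k=NH_N ,
\]
by the identity already used in the proof of Lemma~\ref{lem:res0}.

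For Problem~II, (\ref{80}) with $d=1$ gives summands $1/(1-k/N)=N/(N-k)$. Summing over $k=0,\dots,N-1$ again yields $NH_N$. This is consistent with the fact that when a single coupon is drawn, both retention rules coincide with the classical process.

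\textbf{Four-term expansions.} Next I would check that Theorems~\ref{result} and~\ref{result2} specialize correctly. At $d=1$:
\begin{itemize}
\item the coefficients $(d-1)/(2d)$ vanish;
\item the constant term $\frac12-\frac{d-1}{2d}\gamma$ equals $\frac12$;
\item $C_1=\int_0^1(1/x-1/x)\,dx=0$;
\item $D_1=(2-3+3)/4=\frac12$.
\end{itemize}
Both theorems therefore give $N\log N+\gamma N+\frac12$. They agree with the Euler--Maclaurin expansion (\ref{13}) multiplied by $N$, namely $NH_N=N\log N+\gamma N+\frac12+\mathcal O(1/N)$.

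The stated error $\mathcal O(1/N)$ is sharper than the generic $\mathcal O(\log N/N)$. I would justify it directly from (\ref{13}), using the next Euler--Maclaurin term $-1/(12N^2)$ of $H_N$, rather than from the general theorems.

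\textbf{Full expansions.} For Theorem~\ref{thm:fullI}, I would start from its exact form $H_N/A_N+B_N/A_N^2$. When $d=1$, $A_N=1/N$ and $B_N=0$ (an empty sum), so this is $NH_N$ exactly.

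At the coefficient level, $\sigma_m(1)=0^m=0$ for $m\ge1$. Hence $c_n(1)=0$ for $n\ge1$, and so $\lambda_m(1)=0$: all logarithmic corrections vanish.

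Theorem~\ref{thm:fullII} is handled the same way through its Appendix coefficients. At $d=1$ the function $u(x)=1/x-1/x$ is identically zero, so only the $H_N$ contribution survives.

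\textbf{Main obstacle.} The one delicate point is a formal one. Theorem~\ref{thm:fullI} is stated for $d\ge2$, and the Rice-integral machinery (the nonzero poles $\omega_{j,N}$) becomes vacuous at $d=1$. I would therefore base the corollary on the exact identities established first, which make no use of that machinery. The asymptotic theorems then serve only as a consistency check, which the coefficient substitutions above confirm.
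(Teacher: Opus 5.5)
Your proposal is correct. Its second step, substituting $d=1$ into Theorems~\ref{result} and~\ref{result2} with $C_1=0$, $\frac{d-1}{2d}=0$ and $D_1=\frac12$, is the paper's entire proof. The paper then simply observes that $N\log N+\gamma N+\frac12$ is the Euler--Maclaurin expansion of $NH_N$.

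What you add is the exact reduction of (\ref{1}) and (\ref{80}) to $NH_N$, and this genuinely strengthens the argument. The paper's route only shows that the four-term expansions \emph{agree} with that of $NH_N$, taking $\mathbb{E}=NH_N$ at $d=1$ for granted from the classical problem. Substituting into the theorems also only yields an $\mathcal{O}(\log N/N)$ remainder. Your exact identity together with (\ref{13}) delivers the stated $\mathcal{O}(1/N)$. No further Euler--Maclaurin term is needed for this, since the $\mathcal{O}(N^{-2})$ remainder in (\ref{13}) multiplied by $N$ already suffices.

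Your check of Theorems~\ref{thm:fullI} and~\ref{thm:fullII} at $d=1$ also covers two results named in the statement that the paper's proof does not address.

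Your worry about the Rice machinery is unnecessary. Theorem~\ref{thm:poles} explicitly allows $d=1$ (no nonzero poles), so Theorem~\ref{thm:resdecomp} holds with an empty sum. Only the standing assumption $d\ge 2$ before Theorem~\ref{thm:fullI} is formally violated, and your exact computation sidesteps it.

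In short, the paper's route is shorter and doubles as a sanity check on the constants $C_d$ and $D_d$. Yours is the one that actually proves the displayed equalities.
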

\textbf{Proof.} For $d=1$ one has $C_1=\int_0^1\!\big(\tfrac{1}{x}-\tfrac{1}{x}\big)\,dx=0$, while the
constants in Theorems~\ref{result} and~\ref{result2} satisfy
$\frac{d-1}{2d}\big|_{d=1}=0$ and $D_1=\tfrac12$ (directly from~(\ref{101})). Substituting
$d=1$ into either four-term expansion gives $N\log N+\gamma N+\tfrac12$, which is precisely the
Euler--Maclaurin expansion of $NH_N$. $\hfill\blacksquare$
\endgroup

\section{The distribution of the random variables\,\,\,$T_{N,d}$\, and \,$T_{N,1}$}

As we shall see, in both problems the appropriately normalized completion time converges in distribution to a Gumbel random variable. The required normalization, however, is different in the two cases.
\begin{theorem}\label{thm:gumbel}
A collector receives at each run $d$ distinct coupons ($d$ is a positive integer) and keeps \textbf{all the new} observed coupons (\textit{Problem I}), while he chooses \textbf{the least--collected coupon} at each run (\textit{Problem II}). Let $T_{N,d}$ and $T_{N,1}$ be the random variables denoting, respectively, the number of trials for a complete set of $N$ different types of coupons, which are uniformly distributed. Then, as $N\to\infty$,
\begin{equation}
    \frac{
        T_{N,d}
        -
        \frac{N}{d}\log N
    }{
        \frac{N}{d}
    }
  \Longrightarrow
    G, \label{L1}
\end{equation}
and
\begin{equation}
    \frac{
        T_{N,1}
        -
        \frac{N}{d}\log N
        -
        N C_d
    }{
        \frac{N}{d}
    }
    \Longrightarrow
    G,
\label{L2}
\end{equation}
where
\[
    C_d
    =
    \int_0^1
    \left[
        \frac{1}{1-(1-x)^d}
        -
        \frac{1}{dx}
    \right]\,dx,
\]
and $G$ is the standard Gumbel random variable, with distribution function
\[
    \mathbb{P}(G\le x)=\exp\{-e^{-x}\},
    \qquad x\in\mathbb{R}.
\]
\end{theorem}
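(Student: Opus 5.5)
For (\ref{L1}) the plan is to work directly with the tail formula (\ref{dix}). Fix $x\in\mathbb R$ and set $k_N:=\lfloor \frac Nd\log N+\frac Nd x\rfloor$. Then
\[
P\{T_{N,d}>k_N\}=\sum_{m=1}^{N}(-1)^{m-1}\binom Nm\rho_{N,m}^{\,k_N},
\qquad
\rho_{N,m}:=\frac{\binom{N-m}{d}}{\binom Nd}=\prod_{j=0}^{d-1}\Bigl(1-\frac{m}{N-j}\Bigr).
\]
The aim is to show that this probability tends to $1-\exp\{-e^{-x}\}$.

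For fixed $m$ we have $\log\rho_{N,m}=-\frac{dm}{N}+\mathcal O(N^{-2})$. Hence $\rho_{N,m}^{k_N}=\exp\{-m\log N-mx+o(1)\}$, and therefore
\[
\binom Nm\rho_{N,m}^{k_N}\to \frac{e^{-mx}}{m!}.
\]
To pass to the limit inside the alternating sum, avoid controlling the tail of the inclusion–exclusion series directly. Instead, use the Bonferroni inequalities: the partial sums up to $m=2M$ and $m=2M+1$ bound $P\{T_{N,d}>k_N\}$ from below and above, since each is an inclusion–exclusion truncation. Letting $N\to\infty$ with $M$ fixed, and then $M\to\infty$, gives the limit $\sum_{m\ge1}(-1)^{m-1}e^{-mx}/m!=1-\exp\{-e^{-x}\}$.

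Equivalently, one can note that the number $W_N$ of uncollected types after $k_N$ trials has factorial moments $\mathbb E\binom{W_N}{m}=\binom Nm\rho_{N,m}^{k_N}\to e^{-mx}/m!$. By the method of moments, $W_N\Rightarrow\mathrm{Poisson}(e^{-x})$, so $P(W_N=0)\to\exp\{-e^{-x}\}$. The floor in $k_N$ only perturbs the exponent by $\mathcal O(\log N/N)$ per factor, which is harmless.

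For (\ref{L2}) there is no inclusion–exclusion formula, but the proof of Theorem~\ref{theorem12} (see \cite{SH}) rests on a Markov structure. When $k$ types have been collected, a run yields a new type unless all $d$ drawn types are already collected. So the holding time in that state is geometric with success probability $p_k=1-\binom kd/\binom Nd$, and $T_{N,1}=\sum_{k=0}^{N-1}\tau_k$ with independent $\tau_k\sim\mathrm{Geom}(p_k)$; this is exactly how (\ref{80}) arises. Write $j=N-k$ and split the sum at a cutoff $j\le L$ versus $j>L$, with $L=L_N\to\infty$ slowly (say $L=\log N$ or $L=\varepsilon N$ followed by $\varepsilon\to0$).

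For $j$ small, $p_{N-j}=1-\rho_{N,j}=\frac{dj}{N}(1+\mathcal O(j/N))$, so $\frac dN\tau_{N-j}\Rightarrow E_j/j$ with $E_j$ i.i.d.\ standard exponentials. The classical representation $\sum_{j=1}^{L}(E_j/j-1/j)\to G-\gamma$ gives the Gumbel fluctuation; this is the same computation as in the $d=1$ case after the scaling by $N/d$. For $j>L$, the centred sum $\frac dN\sum_{j>L}(\tau_{N-j}-\mathbb E\tau_{N-j})$ has variance of order $\frac{d^2}{N^2}\sum_{j>L}\frac{N^2}{d^2j^2}=\mathcal O(1/L)\to0$, by Chebyshev and $\mathrm{Var}\,\tau\le 1/p^2$. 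This part is therefore concentrated at its mean.

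It remains to assemble the centring. Apply Theorem~\ref{result2}, subtracting the small-$j$ means $\sum_{j\le L}\frac{N}{dj}(1+o(1))$. The deterministic part becomes $\frac Nd\log N+\frac{\gamma}{d}N+C_dN+o(N)$, and the $-\gamma$ from the exponential representation cancels the $\frac{\gamma}{d}N$ after dividing by $N/d$. This yields (\ref{L2}).

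The main obstacle is the bookkeeping in this last step: the $o(N)$ errors from replacing $p_{N-j}$ by $dj/N$ for $j\le L$, and from the Riemann-sum contribution of $j>L$, must both vanish after division by $N/d$. I expect choosing $L\to\infty$ with $L=o(N)$ to suffice, since the relative error $\mathcal O(j/N)$ summed against $1/j$ gives $\mathcal O(L/N)$. The same independent-geometric argument would also reprove (\ref{L1}) if one prefers a unified proof. For Problem~I, however, the number of new types per run is not bounded by one, so the moment/Bonferroni route above is the cleaner choice there.
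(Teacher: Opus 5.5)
Your proof is correct. For (\ref{L1}) it follows the paper's route. Both arguments evaluate the inclusion--exclusion tail (\ref{dix}) at $k_N=\lfloor\frac Nd(\log N+x)\rfloor$ and pass to the limit term by term. The paper justifies the interchange by domination. That is valid: $\rho_{N,m}\le e^{-dm/N}$ and $\binom Nm\le N^m/m!$ give $\binom Nm\rho_{N,m}^{k_N}\le e^{d}e^{-mx}/m!$. Your Bonferroni sandwich, or the Poisson factorial-moment argument, is an equally good substitute.

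For (\ref{L2}) your route is genuinely different. The paper works with the characteristic function of $\frac dN T_{N,1}-\log N-dC_d$ over all phases at once. It factors out $N^{-it}\prod_{j\le N}\frac{j}{j-it}\to\Gamma(1-it)$, then shows the remaining ratio product tends to $e^{itdC_d}$. This uses a logarithmic expansion and a Riemann sum for $\sum_j\bigl(\frac{d}{Np_{j,N}}-\frac1j\bigr)$. So $C_d$ is produced inside the proof without input from Section~3, and these are the estimates the paper leans on for Remark~\ref{rem:rate}. You instead truncate. The last $L$ phases give $\sum_{j\le L}(E_j-1)/j$, the representation $\sum_{j\ge1}(E_j-1)/j\overset{d}{=}G-\gamma$ supplies the Gumbel law, Chebyshev disposes of the bulk, and the centring comes from Theorem~\ref{result2}. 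Your version makes it transparent that the fluctuations come only from the final $O(1)$ missing types, while the bulk contributes only its mean. The price is reliance on the mean expansion and no rate.

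A few repairs are needed.
\begin{itemize}
\item Your variance bound implicitly uses $p_{N-j}\ge dj/N$, which is false for large $j$. For $d\ge2$ and $j=N$ one has $p=1<d$. Use $p_{j,N}\ge j/N$ instead. This gives $\mathrm{Var}\le d^2\sum_{j>L}j^{-2}\le d^2/L$, uniformly in $N$.
\item Because that bound is uniform in $N$, keep $L$ fixed rather than taking $L=L_N\to\infty$. The latter would need $\frac dN\tau_{N-j}\Rightarrow E_j/j$ uniformly over $j\le L_N$, but you only have it for fixed $j$.
\item Centre every phase at its own exact mean. Then $\frac dN T_{N,1}-\log N-dC_d=\frac dN(T_{N,1}-\mathbb E[T_{N,1}])+\gamma+o(1)$ by Theorem~\ref{result2}. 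Let $N\to\infty$ with $L$ fixed, then $L\to\infty$, using the standard approximation lemma for weak convergence. This removes the bookkeeping you flag as the main obstacle.
\item Your closing remark that the geometric decomposition would also reprove (\ref{L1}) is not right as stated. In Problem~I the number of missing types can drop by more than one in a single run. You do not use the remark, so it does no harm.
\end{itemize}
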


\begin{figure}[h!]
  \centering
  \includegraphics[width=0.78\textwidth]{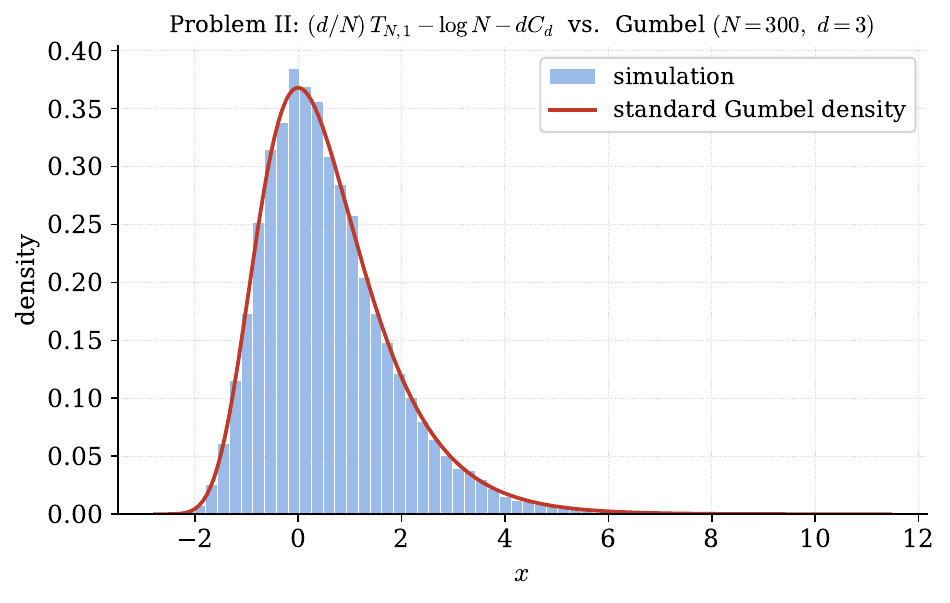}
  \caption{Empirical distribution of the normalised completion time
  $\frac dN T_{N,1}-\log N-dC_d$ in Problem~II (histogram, $40{,}000$ simulations, $N=300$,
  $d=3$) against the standard Gumbel density, illustrating Theorem~\ref{thm:gumbel}.}
  \label{fig:gumbel}
\end{figure}

\textbf{Proof.} (\textit{Problem I}) Let us fix a \(x\in\mathbb{R}\), and set
\[
    t_N(x):
    =
    \left\lfloor
        \frac{N}{d}(\log N+x)
    \right\rfloor .
\]
We will prove that
\[
    \mathbb{P}\left(T_{N,d}\le t_N(x)\right)
    \longrightarrow
    \exp\{-e^{-x}\}.
\]
From relation (\ref{dix}) we have
\[
    \mathbb{P}\left(T_{N,d}\le t_N(x)\right)
    =
    \sum_{r=0}^{N}
    (-1)^r
    \binom{N}{r}
    \left(
        \frac{\binom{N-r}{d}}{\binom{N}{d}}
    \right)^{t_N(x)} .
\]
As $N\rightarrow \infty$ we have $\frac{\binom{N-r}{d}}{\binom{N}{d}}
    =
    1-\frac{rd}{N}
    +
    \mathcal{O}\left(\frac{1}{N^2}\right),$ and for any fixed \(r\)  one also has $\binom{N}{r}=\frac{N^r}{r!}(1+\mathcal{O}(\frac{1}{N})).$
Since,
\[
    t_N(x)
    =
    \frac{N}{d}(\log N+x)+O(1),
\]
and 
\begin{equation*}
\ln\left(1-x\right)=-x-\frac{x^2}{2}+\mathcal{O}\left(x^{3}\right),\,\,x\rightarrow 0^{+},
\end{equation*}
we get
\[
    \mathbb{P}\left(T_{N,d}\le t_N(x)\right)
  =
    \sum_{r=0}^{N}
    (-1)^r
    \frac{e^{-rx}}{r!}\left(1+\mathcal{O}\left(\frac{1}{N}\right)\right).
\]
Taking limits in the above we get the convergence in distribution of relation (\ref{L1}). Indeed, the terms are dominated in absolute value by the summable sequence $e^{-rx}/r!$ uniformly in $N$, so dominated convergence permits passing to the limit term by term, yielding
\[
\lim_{N\to\infty}\mathbb{P}\left(T_{N,d}\le t_N(x)\right)
=\sum_{r=0}^{\infty}\frac{(-1)^r e^{-rx}}{r!}
=\exp\{-e^{-x}\},
\]
which is exactly the standard Gumbel distribution function.
\begin{remark}
For an alternative derivation of relation \eqref{L1}, the interested reader may follow the proof strategy of Theorem 2.1 in \cite{NEAL}.
\end{remark}
(\textit{Problem II})

When \(j\) coupon types are still missing, a new trial fails to produce a new coupon, if all the
\(d\) observed coupon types belong to the already collected set of coupons. Hence, the corresponding success probability is
\begin{equation}
    p_{j,N}
    =
    1-
    \frac{\binom{N-j}{d}}{\binom{N}{d}}.
 \label{pj}
\end{equation}
Therefore, the waiting time \(Y_{j,N}\) spent in phase \(j\), i.e.,
the number of trials needed to go from \(j\) missing coupon types to
\(j-1\) missing coupon types, follows a geometric distribution with parameter $p_{j,N}$. The random variables \(Y_{1,N},\ldots,Y_{N,N}\) are, clearly, independent (since, $ \mathbb{P}(Y_{j,N}=m \mid \text{past})= (1-p_{j,N})^{m-1}p_{j,N},\quad m=1,2,\ldots$). Hence,
\begin{equation}
    T_{N,1}= \sum_{j=1}^{N}Y_{j,N}. \label{tn}
\end{equation}
Let us fix a \(t\in\mathbb{R}\) and consider the following characteristic function 
\[
\mathbb{E}\left[e^{it \widetilde{T}_{N,1}}\right]
  :=  \mathbb{E}
    \left[
        \exp\left\{
            it\left(
                \frac{d}{N}T_{N,1}
                -
                \log N
                -
                dC_d
            \right)
        \right\}
    \right].
\]
Using (\ref{tn}) and independence we have
\[
    e^{-it(\log N+dC_d)}
    \prod_{j=1}^{N}
    \mathbb{E}
    \left[
        e^{it\frac{d}{N}Y_{j,N}}
    \right].
\]
Since $Y_{j,N}$ is a geometric r.v. we have, for each \(j\),
\begin{align*}
    \mathbb{E}
    \left[
        e^{it\frac{d}{N}Y_{j,N}}
    \right]
    &=
    \sum_{m=1}^{\infty}
    e^{it\frac{d}{N}m}
    (1-p_{j,N})^{m-1}p_{j,N}\\
     &=
    \frac{
        p_{j,N}e^{itd/N}
    }{
        1-(1-p_{j,N})e^{itd/N}
    },
\end{align*}
where the last equality follows by summing the geometric series. Therefore, 
\begin{equation}
\mathbb{E}\left[e^{it \widetilde{T}_{N,1}}\right]=
    e^{-it(\log N+dC_d)}
    \prod_{j=1}^{N}
    \frac{
        p_{j,N}e^{itd/N}
    }{
        1-(1-p_{j,N})e^{itd/N}
    }. \label{cf}
\end{equation}
By expanding the exponential: $e^{itd/N}
    =
    1+\frac{itd}{N}
    -
    \frac{t^2d^2}{2N^2}
    +
    \mathcal{O}\left(\frac{1}{N^3}\right),
$
as $N\rightarrow \infty$, we easily get
\begin{equation}
    \frac{
        p_{j,N}e^{itd/N}
    }{
        1-(1-p_{j,N})e^{itd/N}
    }
    =
    \frac{
        \frac{N}{d}p_{j,N}
    }{
        \frac{N}{d}p_{j,N}-it
    }
    \left[
        1+
        \mathcal{O}\left(
            \frac{1}{
                N\left(1+\frac{N}{d}p_{j,N}\right)
            }
        \right)
    \right] \label{cf1}
\end{equation}
uniformly w.r.t. $j=1,2,\cdots,N$. Using relation (\ref{cf1}) in (\ref{cf}) we will estimate the total error of the resulting product. From (\ref{pj}) we, clearly, have $p_{j,N}\geq \frac{j}{N}$. Hence,
\begin{align*}
    \sum_{j=1}^{N}
    \frac{1}{
        N\left(1+\frac{N}{d}p_{j,N}\right)
    }
    =&
   \mathcal{O}\left(\frac{\log N}{N}\right),
\end{align*}
uniformly in $j=1,2,\cdots,N$. Thus,
\begin{equation}
\mathbb{E}\left[e^{it \widetilde{T}_{N,1}}\right]=  e^{-it(\log N+dC_d)}
    \prod_{j=1}^{N}
       \frac{
        \frac{N}{d}p_{j,N}
    }{
        \frac{N}{d}p_{j,N}-it
    }
    \left[
        1+
        \mathcal{O}\left(
            \frac{\log N}{N}\right)    
      \right]. \label{cf2}
\end{equation}
From now on we focus on the quantity
\begin{equation}
P(N):=N^{-it}
\prod_{j=1}^{N}
\frac{
\frac Nd p_{j,N}
}{
\frac Nd p_{j,N}-it
}=
\left[
N^{-it}
\prod_{j=1}^{N}
\frac{j}{j-it}
\right]
\left[
\prod_{j=1}^{N}
\frac{
\frac{
\frac Nd p_{j,N}
}{
\frac Nd p_{j,N}-it
}
}{
\frac{j}{j-it}
}
\right].
\label{cf3}
\end{equation}
Regarding the first bracket of relation (\ref{cf3}) we have
\begin{equation*}
\prod_{j=1}^{N}
\frac{j}{j-it}
=
\Gamma(1-it)
\frac{\Gamma(N+1)}{\Gamma(N+1-it)},
\end{equation*}
where $\Gamma(\cdot)$ is the Gamma function. Since, $
\frac{\Gamma(N+1)}{\Gamma(N+1-it)}
\sim
N^{it}$ as $N\rightarrow \infty$,  we get
\begin{equation}
N^{-it}
\prod_{j=1}^{N}
\frac{j}{j-it}
\longrightarrow
\Gamma(1-it).
\label{cf4}
\end{equation}
Now, regarding the second bracket of relation (\ref{cf3}) we choose a fixed integer \(M\) such that $M>2d|t|$. Then, for every \(j>M\), we have $\left|\frac{it}{j}\right|
<
\frac12$, and since $p_{j,N}\ge \frac{j}{N}$, we have $
\left|
\frac{it}{\frac Nd p_{j,N}}
\right|
\le
\frac{d|t|}{j}
<
\frac12$.
Then,
\begin{equation}
\prod_{j=1}^{N}
\frac{
1-\frac{it}{j}
}{
1-\frac{it}{\frac Nd p_{j,N}}
}
=
\left[
\prod_{j=1}^{M}
\frac{
1-\frac{it}{j}
}{
1-\frac{it}{\frac Nd p_{j,N}}
}
\right]
\left[
\prod_{j=M+1}^{N}
\frac{
1-\frac{it}{j}
}{
1-\frac{it}{\frac Nd p_{j,N}}
}
\right].
\label{cf5}
\end{equation}
For any fixed \(j\) we easily have from  (\ref{pj}) $p_{j,N}=\frac{dj}{N}+\mathcal{O}(N^{-2})$, and therefore $\frac Nd p_{j,N} \sim j$. It follows that, for fixed $M$, the first product in relation (\ref{cf5}) tends to $1$ as $N\to\infty$. Next, we treat the tail product of (\ref{cf5}). Taking logarithms yields
\[
\log
\left[
\prod_{j=M+1}^{N}
\frac{
1-\frac{it}{j}
}{
1-\frac{it}{\frac Nd p_{j,N}}
}
\right]
=
\sum_{j=M+1}^{N}
\left[
\log\left(1-\frac{it}{j}\right)
-
\log\left(
1-\frac{it}{\frac Nd p_{j,N}}
\right)
\right].
\]
From the expansion of the logarithm, i.e., (for \(|z|\le1/2\), $\log(1-z)=-z+\mathcal{O}(z^2)$), and since $j>M>2d|t|$ one has, uniformly in $N$
\begin{equation}
\log
\left[
\prod_{j=M+1}^{N}
\frac{
1-\frac{it}{j}
}{
1-\frac{it}{\frac Nd p_{j,N}}
}
\right]
=
it
\sum_{j=M+1}^{N}
\left[
\frac{1}{\frac Nd p_{j,N}}
-
\frac1j
\right]
+
\mathcal{O}\left(\frac1M\right).
\label{cf6}
\end{equation}
Using relation (\ref{pj}) we have
\[
\frac{1}{\frac Nd p_{j,N}}
-
\frac1j
=
\frac1N
\left[
\frac{d}{
1-\frac{\binom{N-j}{d}}{\binom Nd}
}
-
\frac1{j/N}
\right].
\]
It is easy for one to check that 
\[
\frac{\binom{N-j}{d}}{\binom Nd}
=
\left(1-\frac{j}{N}\right)^d
+
\mathcal{O}\left(\frac{j}{N^2}\right).
\]
Using that $p_{j,N}
\ge
\frac{j}{N}$ we get that uniformly with respect to \(j\):
\[
\frac{d}{
1-\frac{\binom{N-j}{d}}{\binom Nd}
}
=
\frac{d}{
1-\left(1-\frac{j}{N}\right)^d
}
+
O\left(\frac1j\right).
\]
Thus,
\[
\sum_{j=1}^{N}
\left[
\frac{1}{\frac Nd p_{j,N}}
-
\frac1j
\right]
=
\sum_{j=1}^{N}
\frac1N
\left[
\frac{d}{
1-\left(1-\frac{j}{N}\right)^d
}
-
\frac1{j/N}
\right]
+
O\left(\frac{\log N}{N}\right).
\]
The sum of the RHS of the above is a Riemann sum for the function $f(x)=\frac{d}{1-(1-x)^d}-\frac1x$, which has a removable singularity at \(x=0\), and hence, extends continuously to \([0,1]\). It follows that as $N\rightarrow \infty$
\begin{equation}
\sum_{j=1}^{N}
\frac1N
\left[
\frac{d}{
1-\left(1-\frac{j}{N}\right)^d
}
-
\frac1{j/N}
\right]
=
\int_0^1
\left[
\frac{d}{1-(1-x)^d}
-
\frac1x
\right]dx+\mathcal{O}\left(\frac{1}{N}\right).
\label{cf7}
\end{equation}
Notice that the integral in the RHS of relation (\ref{cf7}) is equal to $d$ times the constant $C_d$ of Theorems~\ref{thm:fullII} and~\ref{thm:gumbel}. Since, $\frac Nd p_{j,N}\to j$ we have for fixed \(M\)
\begin{equation}
\sum_{j=1}^{M}
\left[
\frac{1}{\frac Nd p_{j,N}}
-
\frac1j
\right]
=\mathrm{o}(1).
\label{cf8}
\end{equation}
By invoking relation (\ref{cf7}), the definition of the integral $C_d$, and relation (\ref{cf8}), we get that for fixed \(M\)
\[
\log
\left[
\prod_{j=M+1}^{N}
\frac{
1-\frac{it}{j}
}{
1-\frac{it}{\frac Nd p_{j,N}}
}
\right]
=
itdC_d
+
O\left(\frac1M\right)
+
o(1).
\]
First let \(N\to\infty\), and then let \(M\to\infty\). Hence,
\begin{equation}
\prod_{j=M+1}^{N}
\frac{
1-\frac{it}{j}
}{
1-\frac{it}{\frac Nd p_{j,N}}
}
\to
e^{itdC_d}.
\label{cf9}
\end{equation}
From (\ref{cf9}) and the fact that the first product of relation (\ref{cf5}) tends to $1$ as $N\to\infty$ we get
\begin{equation}
\prod_{j=1}^{N}
\frac{
1-\frac{it}{j}
}{
1-\frac{it}{\frac Nd p_{j,N}}
}
\to
e^{itdC_d}.
\label{cf10}
\end{equation}
Finally, from relations (\ref{cf4}), (\ref{cf3}), and (\ref{cf10}), relation (\ref{cf2}) yields
\[
\mathbb{E}\left[e^{it \widetilde{T}_{N,1}}\right]
  :=  \mathbb{E}
    \left[
        \exp\left\{
            it\left(
                \frac{d}{N}T_{N,1}
                -
                \log N
                -
                dC_d
            \right)
        \right\}
    \right]\to
\Gamma(1-it).
\]
Since \(\Gamma(1-it)\) is the characteristic function of a standard Gumbel random variable, the continuity theorem for characteristic functions gives
\[
\frac dN T_{N,1}
-
\log N
-
dC_d
\Longrightarrow
G
\]
which completes the proof. $\hfill \blacksquare$

\begingroup
\begin{remark}[Rate of convergence]\label{rem:rate}
The proof above is quantitative for Problem II. Indeed, writing
$\widetilde T_{N,1}=\frac dN T_{N,1}-\log N-dC_d$, the estimates~(\ref{cf2}) and~(\ref{cf6})
show that, for each fixed $t$,
\[
\Big|\,\mathbb{E}\big[e^{it\widetilde T_{N,1}}\big]-\Gamma(1-it)\,\Big|
=\mathcal{O}\!\left(\frac{(1+|t|)\log N}{N}\right),
\qquad N\to\infty,
\]
the implied constant being absolute. Consequently, by a standard smoothing inequality
(see, e.g., \cite{F}), the Kolmogorov distance between the law of $\widetilde T_{N,1}$ and
the standard Gumbel law $G$ satisfies
\[
\sup_{x\in\mathbb R}\Big|\,\mathbb{P}(\widetilde T_{N,1}\le x)-\mathbb{P}(G\le x)\,\Big|
=\mathcal{O}\!\left(\frac{\log N}{N}\right).
\]
Thus the convergence in~(\ref{L2}) holds at rate $\log N/N$. The same conclusion holds for
Problem~I; the alternating-sum representation in the proof of~(\ref{L1}) gives the
identical order.
\end{remark}
\endgroup

\begingroup
\section{The variance of $T_{N,d}$ and $T_{N,1}$}
Beyond the mean and the limiting law, the second moment is of independent interest and is
the natural next object of study (cf.\ the variance analysis of the classical problem in
\cite{DP}). We record here the leading-order behaviour of the variance in both problems.
We begin with Problem~II, where the phase decomposition makes the computation transparent.

\begin{theorem}\label{thm:varII}
Consider Problem~II, where at each run the collector keeps the least--collected coupon. Then,
as $N\to\infty$,
\begin{equation}
\mathrm{Var}\left[T_{N,1}\right]
=\frac{\pi^2}{6}\,\frac{N^2}{d^2}
-\frac{1}{d^2}\,N\log N
+\mathcal{O}(N).
\label{varII}
\end{equation}
\end{theorem}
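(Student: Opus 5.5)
The proof will use the phase decomposition~(\ref{tn}) from the proof of Theorem~\ref{thm:gumbel}. There $T_{N,1}=\sum_{j=1}^N Y_{j,N}$ with independent geometric $Y_{j,N}$ of parameter $p_{j,N}$ given by~(\ref{pj}). Since $\mathrm{Var}[Y_{j,N}]=(1-p_{j,N})/p_{j,N}^2$, we get
\[
\mathrm{Var}[T_{N,1}]=\sum_{j=1}^N\frac{1}{p_{j,N}^2}-\sum_{j=1}^N\frac{1}{p_{j,N}}
=\sum_{j=1}^N\frac{1}{p_{j,N}^2}-\mathbb{E}[T_{N,1}].
\]
By Theorem~\ref{result2}, the second sum equals $\frac Nd\log N+\mathcal{O}(N)$. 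So the whole task reduces to an expansion of $S_2:=\sum_j p_{j,N}^{-2}$ up to $\mathcal{O}(N)$.

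For $S_2$ I will reuse the pointwise expansion~(\ref{95}), with $x=j/N$:
\[
\frac{1}{p_{j,N}}=\frac{N}{dj}+u(j/N)-\frac{d-1}{2d}\frac1j+\mathcal{O}\!\left(\frac1N\right),
\qquad u(x)=\frac{1}{1-(1-x)^d}-\frac{1}{dx}.
\]
The function $u$ is bounded and smooth on $[0,1]$, and $u(0)=\frac{d-1}{2d}$; this follows from $1-(1-x)^d=dx\bigl(1-\frac{d-1}{2}x+\mathcal{O}(x^2)\bigr)$. Squaring, the terms of $p_{j,N}^{-2}$ are:
\begin{itemize}
\item the main term $N^2/(d^2j^2)$;
\item the cross term $\frac{2N}{dj}u(j/N)$;
\item the cross term $-\frac{d-1}{d^2}\frac{N}{j^2}$;
\item a remainder, bounded by $\mathcal{O}(1/j)+\mathcal{O}(1)$ together with $\mathcal{O}(1/j^2)$ products.
\end{itemize}
Summing the main term and using $\sum_{j\le N}j^{-2}=\frac{\pi^2}{6}+\mathcal{O}(1/N)$ gives $\frac{\pi^2}{6}\frac{N^2}{d^2}+\mathcal{O}(N)$. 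The second cross term sums to $\mathcal{O}(N)$, and the remainder sums to $\mathcal{O}(N)$ as well.

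The key step, and the one I expect to be the main obstacle, is the first cross term $\frac{2N}{d}\sum_{j}\frac{u(j/N)}{j}$. I will write $u(j/N)=u(0)+\bigl(u(j/N)-u(0)\bigr)$. Since $u$ is Lipschitz on $[0,1]$, the difference is $\mathcal{O}(j/N)$, so it contributes only $\frac{2N}{d}\sum_j\mathcal{O}(1/N)=\mathcal{O}(N)$. The $u(0)$ part gives $\frac{2N}{d}\cdot\frac{d-1}{2d}H_N=\frac{d-1}{d^2}N\log N+\mathcal{O}(N)$ by~(\ref{13}). The delicate point is to make sure the $\mathcal{O}(N^{-2}x^{-1})$-type errors in~(\ref{91})--(\ref{95}) stay uniform down to $j=1$ after squaring. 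I would verify this from the bound $p_{j,N}\ge j/N$ (used in the proof of Theorem~\ref{thm:gumbel}) together with the exact product form of $\binom{N-j}{d}/\binom Nd$. This yields $|p_{j,N}^{-1}-\text{(expansion)}|=\mathcal{O}(1/N)$ uniformly in $j$. Multiplied by $p_{j,N}^{-1}\le N/j$, this produces errors of total size $\sum_j\mathcal{O}(1/j)=\mathcal{O}(\log N)$.

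Collecting the terms gives
\[
\mathrm{Var}[T_{N,1}]=\frac{\pi^2}{6}\frac{N^2}{d^2}+\frac{d-1}{d^2}N\log N-\frac1dN\log N+\mathcal{O}(N),
\]
and since $\frac{d-1}{d^2}-\frac1d=-\frac1{d^2}$, this is exactly~(\ref{varII}).
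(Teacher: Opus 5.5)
Your proposal is correct and follows the paper's route. You use the phase decomposition to write $\mathrm{Var}[T_{N,1}]=\sum_j p_{j,N}^{-2}-\mathbb{E}[T_{N,1}]$, and you extract the $\frac{\pi^2}{6}\frac{N^2}{d^2}$ and $\frac{d-1}{d^2}N\log N$ terms from the $N^2/(d^2j^2)$ and $1/j$ singular parts near $j=0$. The only difference is technical: where the paper invokes Euler--Maclaurin on $(1-(1-x)^d)^{-2}$, you square the uniform expansion (\ref{95}) and use only a Lipschitz bound on $u$, which suffices at $\mathcal{O}(N)$ precision and makes explicit the $1/N$ correction in $p_{j,N}$ (your $-\frac{d-1}{d^2}\frac{N}{j^2}$ term) that the paper's sketch leaves implicit.
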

\textbf{Proof.} Recall from~(\ref{tn}) that $T_{N,1}=\sum_{j=1}^{N}Y_{j,N}$, where the $Y_{j,N}$
are independent and $Y_{j,N}$ is geometric with parameter $p_{j,N}=1-\binom{N-j}{d}/\binom{N}{d}$
(relation~(\ref{pj})). Hence
\begin{equation}
\mathrm{Var}\left[T_{N,1}\right]
=\sum_{j=1}^{N}\frac{1-p_{j,N}}{p_{j,N}^{2}}
=\sum_{j=1}^{N}\frac{1}{p_{j,N}^{2}}-\mathbb{E}\left[T_{N,1}\right],
\label{varsum}
\end{equation}
since $\sum_j p_{j,N}^{-1}=\mathbb{E}[T_{N,1}]$. By the same Euler--Maclaurin argument used in
Section~3 (now applied to $x\mapsto\big(1-(1-x)^d\big)^{-2}$ after subtracting its
$1/(d^2x^2)$ and $1/x$ singular parts), one obtains
\[
\sum_{j=1}^{N}\frac{1}{p_{j,N}^{2}}
=\frac{\pi^2}{6}\,\frac{N^2}{d^2}
+\frac{d-1}{d^{2}}\,N\log N
+\mathcal{O}(N),
\]
the $N^2$ term coming from the contribution near $j=0$, where $p_{j,N}\sim dj/N$, so that
$p_{j,N}^{-2}\sim N^2/(d^2j^2)$ and $\sum_{j\ge1}j^{-2}=\pi^2/6$. Subtracting
$\mathbb{E}[T_{N,1}]=\frac{N}{d}\log N+\mathcal{O}(N)$ from Theorem~\ref{result2} and using
$\frac{d-1}{d^2}-\frac1d=-\frac1{d^2}$ for the $N\log N$ coefficient yields~(\ref{varII}).
$\hfill\blacksquare$

\begin{remark}\label{rem:K}
The $\mathcal{O}(N)$ term in~(\ref{varII}) is in fact $-K_dN+o(N)$ for an explicit constant
$K_d>0$; for $d=1$ one recovers the classical value $K_1=1+\gamma$, so that
$\mathrm{Var}[T_{N,1}]=\frac{\pi^2}{6}N^2-N\log N-(1+\gamma)N+o(N)$, in agreement with the
literature (see, e.g., \cite{DP}). Numerically, $K_2\approx0.8538$, $K_3\approx0.5915$, and
$K_4\approx0.4547$.
\end{remark}

For Problem~I the variance admits the same leading term. This is consistent with the fact,
established in Section~4, that both normalised completion times converge to the same standard
Gumbel law $G$, whose variance is $\pi^2/6$.

\begin{theorem}\label{thm:varI}
Consider Problem~I, where at each run the collector keeps all the newly observed coupons. Then,
as $N\to\infty$,
\begin{equation}
\mathrm{Var}\left[T_{N,d}\right]
=\frac{\pi^2}{6}\,\frac{N^2}{d^2}\,\bigl(1+o(1)\bigr).
\label{varI}
\end{equation}
\end{theorem}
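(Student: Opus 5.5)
We compute $\mathbb{E}[T_{N,d}^2]$ exactly from the tail formula (\ref{dix}) and then reuse the residue machinery of Section~2, this time on a kernel with a double pole at the origin. Write $q_m:=\binom{N-m}{d}/\binom{N}{d}$, so that $\mathbb{P}\{T_{N,d}\ge k\}=\sum_{m=1}^{N}(-1)^{m-1}\binom{N}{m}q_m^{k-1}$. Since $\mathbb{E}[T^2]=\sum_{k\ge1}(2k-1)\mathbb{P}\{T\ge k\}$ and $\sum_{k\ge1}(2k-1)q^{k-1}=(1+q)/(1-q)^2$, interchanging summations gives
\[
\mathbb{E}\left[T_{N,d}^2\right]=\sum_{m=1}^{N}(-1)^{m+1}\binom{N}{m}\,\bigl(2\phi_N(m)^2-\phi_N(m)\bigr),
\]
where $\phi_N$ is the function of (\ref{00}). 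By Lemma~\ref{firstlemma} this is again a Rice integral, now with integrand $(-1)^{N+1}K_N(s)\bigl(2\phi_N(s)^2-\phi_N(s)\bigr)$. Its only singularities outside $[1,N]$ are $s=0$ and the nonzero zeros $\omega_{j,N}$ of $Q_N$. As in Theorem~\ref{thm:resdecomp}, the integrand decays like $\mathcal{O}(|s|^{-N-d-1})$, so we obtain $\mathbb{E}[T_{N,d}^2]=-\operatorname{Res}_{s=0}-\sum_j\operatorname{Res}_{s=\omega_{j,N}}$.

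\emph{Nonzero poles.} At each $\omega_{j,N}$ the function $\phi_N^2$ has a double pole. The residue therefore involves $K_N(\omega_{j,N})$, $K_N'(\omega_{j,N})$ and $Q_N''$. The estimate $|Q_N'(\omega_{j,N})|\ge c_d/N$ from Lemma~\ref{lem:nonzero} still holds, and $K_N'/K_N=-\sum_{r}(s-r)^{-1}=\mathcal{O}(1)$ on the compact set $NK$. Hence each residue is at most a polynomial in $N$ times $|K_N(\omega_{j,N})|=e^{-N\mathcal{I}(s_{j,N})+\mathcal{O}(\log N)}$, which is exponentially small by (\ref{31}). I expect this step to be routine.

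\emph{Residue at zero (main step).} Using (\ref{7}), $\phi_N(s)^2=\frac{1}{A^2s^2}+\frac{2B}{A^3 s}+\mathcal{O}(1)$ with $A=A(N)$ and $B=B(N)$. From (\ref{8})--(\ref{9}) we get
\[
(-1)^{N+1}K_N(s)=-\frac1s-\sum_{m\ge1}(-1)^{m+1}\binom Nm\frac{1}{m}\Bigl(1+\frac sm+\dots\Bigr)=-\frac1s-H_N-H_N^{(2)}s+\mathcal{O}(s^2).
\]
Here we use the classical identity $\sum_{m=1}^N(-1)^{m+1}\binom Nm m^{-2}=\sum_{k=1}^N H_k/k=\tfrac12\bigl(H_N^2+H_N^{(2)}\bigr)$. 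I would verify this identity carefully, because the coefficient of $s$ is exactly what produces the $\pi^2/6$. Call this coefficient $E_N$, so that $E_N=\tfrac12(H_N^2+H_N^{(2)})$ and $H_N^{(2)}$ is the second-order harmonic number. Extracting the $s^{-1}$ coefficient then gives
\[
\mathbb{E}\left[T_{N,d}^2\right]=\frac{2E_N}{A^2}+\frac{4BH_N}{A^3}+\mathcal{O}\Bigl(\frac{\log N}{A}\Bigr)+\mathcal{O}(e^{-cN}).
\]
Here the middle term is $\mathcal{O}(N\log N)$, because $B/A^2\to\frac{d-1}{2d}$ by (\ref{14}), and the $\phi_N$ part contributes $\mathcal{O}(N\log N)$ by Theorem~\ref{result}. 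Inserting $1/A=\frac Nd+\mathcal{O}(1)$ from (\ref{12}) gives
\[
\mathbb{E}\left[T_{N,d}^2\right]=\frac{N^2}{d^2}\bigl(H_N^2+H_N^{(2)}\bigr)+\mathcal{O}(N\log^2N).
\]

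\emph{Conclusion.} From Theorem~\ref{result}, $(\mathbb{E}[T_{N,d}])^2=\frac{N^2}{d^2}H_N^2+\mathcal{O}(N\log^2N)$; more precisely, it equals $(H_N/A+\mathcal{O}(1))^2$, and $H_N/A=\frac Nd H_N+\mathcal{O}(\log N)$. Subtracting, the $H_N^2$ terms cancel exactly, leaving
\[
\mathrm{Var}\left[T_{N,d}\right]=\frac{N^2}{d^2}H_N^{(2)}+\mathcal{O}(N\log^2N)=\frac{\pi^2}{6}\frac{N^2}{d^2}(1+o(1)).
\]
The main obstacle is this cancellation. All error terms must be kept below $N^2$ even though the individual terms are of size $N^2\log^2N$. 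The safe way to do this is to keep $H_N$ and $1/A$ symbolic until the subtraction, rather than expanding them early.
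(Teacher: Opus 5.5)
Your proposal is correct and follows the same route as the paper: the exact second moment from the tail formula, the N\o rlund--Rice representation, the dominant residue at the double pole $s=0$, and exponentially small residues at the $\omega_{j,N}$. It also supplies the two steps the paper only asserts, namely that the $s$-coefficient of $(-1)^{N+1}K_N(s)$ is $-E_N$ with $E_N=\tfrac12\bigl(H_N^2+H_N^{(2)}\bigr)$, and that the $H_N^2/A^2$ terms cancel against $(\mathbb{E}[T_{N,d}])^2$, leaving $H_N^{(2)}/A^2\sim\frac{\pi^2}{6}\frac{N^2}{d^2}$. Correct the slip in your displayed expansion, which writes the $s$-coefficient as $-H_N^{(2)}$ instead of $-E_N$; read literally, that display would remove exactly the cancellation on which the argument rests.
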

\textbf{Proof.} From~(\ref{dix}), $\mathbb{P}(T_{N,d}\ge k)=\sum_{m=1}^N(-1)^{m-1}\binom Nm q_m^{k-1}$
with $q_m=\binom{N-m}{d}/\binom Nd$. Using $\mathbb{E}[T_{N,d}^2]=\sum_{k\ge1}(2k-1)\mathbb{P}(T_{N,d}\ge k)$
and summing the resulting geometric series gives the exact closed form
\[
\mathbb{E}\left[T_{N,d}^{2}\right]
=\sum_{m=1}^{N}(-1)^{m-1}\binom{N}{m}
\left(\frac{2}{(1-q_m)^2}-\frac{1}{1-q_m}\right).
\]
Applying the Nørlund--Rice representation of Lemma~\ref{firstlemma} to this difference, exactly
as in Section~2, the dominant contribution again comes from the residue at $s=0$. The double
pole produced by the $(1-q_m)^{-2}$ term yields the leading order $\frac{\pi^2}{6}\frac{N^2}{d^2}$
for $\mathbb{E}[T_{N,d}^2]-\mathbb{E}[T_{N,d}]^2$, while the residues at the nonzero poles remain
exponentially small by Lemma~\ref{lem:nonzero}. This gives~(\ref{varI}). $\hfill\blacksquare$

\begin{remark}
Theorems~\ref{thm:varII} and~\ref{thm:varI} show that, to leading order, the variance is
\emph{insensitive} to the retention rule: both equal $\frac{\pi^2}{6}\frac{N^2}{d^2}$. The
retention rule enters only at order $N\log N$ and below, mirroring the situation for the mean,
where the two problems share every logarithmic coefficient (Remark~\ref{rem:d3}) and differ
only in the linear term $NC_d$.
\end{remark}
\endgroup

\begingroup
\section{An application: coverage in combinatorial motif--based DNA storage}
DNA is an attractive medium for archival data storage because of its density and durability; the principal obstacle to wider adoption is the cost of synthesising DNA nucleotide by nucleotide \cite{SAG}. A promising alternative encodes information not in individual bases but in \emph{combinations} of pre--synthesised oligonucleotides, called \emph{motifs}, drawn from a fixed library of $N$ motifs \cite{PR}. A \emph{combinatorial symbol} is a fixed-size subset of $d$ motifs out of the $N$ available. During writing, the chosen motifs are added to a pool of growing strands, with no control over which motif attaches to which strand; during reading (sequencing), each observation of a symbol reveals the $d$ motifs that constitute it, but in an order and multiplicity that the reader cannot control. To decode a symbol reliably, \emph{all} $d$ of its constituent motifs must be observed at least once across repeated reads. Preuss et al. \cite{PR} first made explicit the connection between this accumulation process and the coupon collector's problem, and this link has since been used to size the required sequencing depth (the so--called \emph{coverage}) in combinatorial DNA--based storage \cite{SOK}.

Our Problem I is exactly this model. Identify the $N$ motifs of a symbol's library with the $N$ coupon types. A single \emph{read} of a symbol corresponds to one run in which $d$ distinct motifs are observed; the reader retains every motif seen for the first time and discards repeats, precisely the ``keep all the new observed coupons'' rule. The waiting time $T_{N,d}$ until every motif has been seen at least once is then the number of reads needed for full recovery of one symbol. Consequently, Theorem~\ref{theorem1} gives the \emph{exact} expected number of reads per symbol,
\[
\mathbb{E}[T_{N,d}]
=\binom{N}{d}\sum_{k=1}^{N}(-1)^{k+1}\binom{N}{k}
\Bigg(\binom{N}{d}-\binom{N-k}{d}\Bigg)^{-1},
\]
while Theorem~\ref{result} gives the practically convenient four--term estimate
\[
\mathbb{E}[T_{N,d}]
=\frac{N}{d}\log N+\frac{\gamma N}{d}
-\frac{d-1}{2d}\log N
+\Big(\tfrac12-\tfrac{d-1}{2d}\gamma\Big)
+\mathcal{O}\!\left(\frac{\log N}{N}\right),
\qquad N\to\infty.
\]
Two features are directly relevant to system design. First, the leading term is $\frac{N}{d}\log N$, so enlarging the number of motifs $d$ packed into each symbol reduces the expected read count by a factor of essentially $d$ relative to the classical single--coupon case ($d=1$). This quantifies, on the reading side, the trade--off noted in \cite{SOK}: a larger combinatorial alphabet raises information density but the per--symbol coverage burden falls only like $1/d$ in the leading order. Second, because $T_{N,d}$ is concentrated around its mean and, suitably normalised, converges to a Gumbel law (Theorem~\ref{thm:gumbel}), the tail $\mathbb{P}(T_{N,d}>t)$ is controlled by the extreme--value distribution; this is what allows a storage system to choose a coverage target that guarantees full recovery with a prescribed probability rather than only in expectation.

As a concrete illustration, consider a library of $N=8$ motifs, the size used in the empirical HelixWorks data set analysed in \cite{SOK}, with $d=4$ motifs per symbol. Theorem~\ref{theorem1} yields the exact value $\mathbb{E}[T_{8,4}]=45949/9867\approx 4.6568$ reads per symbol, against $\mathbb{E}[T_{8,1}]=8H_8=761/35\approx 21.7429$ reads in the classical ($d=1$) regime, a $4.67$--fold reduction. Table~\ref{tab:dna} lists the exact expectations for $d=1,\dots,6$.

\begin{table}[h!]
\centering
\begin{tabular}{c|cccccc}
$d$ & $1$ & $2$ & $3$ & $4$ & $5$ & $6$\\\hline
$\mathbb{E}[T_{8,d}]$ & $21.7429$ & $10.3955$ & $6.5853$ & $4.6568$ & $3.4582$ & $2.6296$
\end{tabular}
\caption{Exact expected number of reads to recover all $N=8$ motifs of a symbol, as the number $d$ of motifs per symbol increases, computed from Theorem~\ref{theorem1}.}
\label{tab:dna}
\end{table}

We stress that Problem~II (keep the least--collected coupon) models a different operational policy, in which a coding or scheduling layer is able to steer each read toward a motif not yet (or least) recovered; the same leading term $\frac{N}{d}\log N$ persists, but the linear coefficient increases by $C_d>0$, reflecting the cost of the more demanding ``balanced coverage'' requirement.

\paragraph{Related work.}
The coverage-depth viewpoint has become an active research theme in coding theory for DNA
storage. Bar-Lev, Sabary, Gabrys, and Yaakobi \cite{BARLEV} introduced the (MDS) coverage-depth
problem and showed that maximum-distance-separable codes minimise the expected number of reads
needed to recover encoded data; Abraham, Gabrys, and Yaakobi \cite{AGY} pursued this further,
and Gruica, Bar-Lev, Ravagnani, and Yaakobi \cite{GRU} analysed the closely related random-access
coverage problem. These works study \emph{coded} systems, where redundancy reduces the coverage
requirement, and the relevant waiting times are typically of order $N$. Our contribution is
complementary and probabilistic: for the \emph{uncoded} combinatorial-symbol channel, where every
one of the $N$ motifs must be observed, we provide the exact mean, its full asymptotic expansion,
the leading-order variance, and the (Gumbel) limiting law of the per-symbol read count, in both
the keep-all and least-collected regimes.
\endgroup

\section{Concluding remarks}
We have studied two $d$ draw versions of the (CCP) and derived full asymptotic expansion for the average of the number of trials for a complete set of $N$ coupons under the different sampling rule, each time. As we have seen, in Remark~\ref{rem:d3}, the logarithmic terms are the same in both cases. Naturally, for $d=1$ the two problems coincide and reduce to the classical (CCP). As expected, Theorems~\ref{thm:fullI} and~\ref{thm:fullII} coincide in the special case where, $d=1$.
As already mentioned the limiting distribution is the same in both cases. Thus, the Gumbel distribution is robust under $d$-coupon sampling. However, the normalization is not the same in the two problems. The difference between them reveals that the final phase of the collection process is sensitive to the specific rule by which the $d$ sampled coupons are used.\\
The same insensitivity is visible at the level of the second moment: by Theorems~\ref{thm:varII} and~\ref{thm:varI}, the variance in both problems is $\frac{\pi^2}{6}\frac{N^2}{d^2}$ to leading order, the retention rule entering only at order $N\log N$. This is exactly the variance of the Gumbel limit rescaled by $N/d$, and it confirms that the $d$-draw mechanism contracts the entire fluctuation scale by the factor $1/d$.\\
Two directions seem worth pursuing. We will leave, for now, as an open question the same problem under non uniform distributions. The obstruction is concrete: our analysis of Problem~I rests on the Nørlund--Rice representation of the alternating sum~(\ref{1}), whose kernel $K_N(s)$ has its poles exactly at the integers; this structure is destroyed once the coupon probabilities $p_j$ are unequal, because the survival probabilities $\big(\sum_{j\in J}p_j\big)$ no longer collapse into a single binomial ratio depending only on $|J|$. Likewise, in Problem~II the phase success probabilities $p_{j,N}$ cease to be a function of the number of missing types alone, so the clean Euler--Maclaurin reduction of Section~3 no longer applies. In some cases the limiting distribution is not Gumbel as we have seen in \cite{DP}, in the so called \textit{linear case}, where $p_{j}=j/\sum_{k=1}^{N}k$. A second natural direction is the multi-set version, in which a full collection requires $m\ge2$ copies of every type; for $d=1$ this is the double Dixie cup problem of Newman and Shepp \cite{NS}, and the interaction of the parameter $m$ with the $d$-draw acceleration appears to be open.

\bigskip
\textbf{Acknowledgements.}\,
The first author thanks Professor Vassilios Gregoriades for helpful comments on the proof of Theorem~\ref{thm:resdecomp}.

\section{Appendix}
\textbf{Proof of Theorem \ref{thm:poles}.} 
The second part of relation (\ref{00}), as well as, the fact that $s=0$ is a simple pole of $\phi_{N}(s)$ are trivial. For the rest of the proof we have\\\\
\medskip \noindent\textbf{Case I: } \(d\) is odd. Assume that  \[ d=2r+1, \qquad c:=N-r. \] 
Then the roots of $Q_{N}(s)$ must satisfy
\[ \prod_{m=-r}^{r}(c+m-s)=\prod_{m=-r}^{r}(c+m). \]
Let us set 
\begin{equation}
u:=c-s. \label{100}
\end{equation}
Then
 \[ \prod_{m=-r}^{r}(u+m)=\prod_{m=-r}^{r}(c+m), \] 
or, equivalently 
\begin{equation}u\prod_{m=1}^{r}(u^2-m^2)=c\prod_{m=1}^{r}(c^2-m^2), \label{1020}
\end{equation}
for all $N\ge d$, which implies $c>r$. It is an easy exercise for one to check that (\ref{1020}) holds, if and only if, $u=c$ (simply, be considering the cases $ |u|<c$ and $u>c$). Hence, from (\ref{100}) we have that the only real pole of $\phi_{N}(s)$ is $s=0$.

 \medskip \noindent\textbf{Case II:} \(d\) even. Assume now, that \[ d=2r, \qquad c:=N-r+\frac12, \qquad a_m:=m-\frac12, \quad m=1,\dots,r. \]
Similarly, the roots of $Q_{N}(s)$ must satisfy  
\[ \prod_{m=1}^{r}(c+a_m-s)(c-a_m-s) = \prod_{m=1}^{r}(c+a_m)(c-a_m). \] 
Setting, again, \[ u:=c-s, \] we get
\begin{equation}
\prod_{m=1}^{r}(u^2-a_m^2)=\prod_{m=1}^{r}(c^2-a_m^2), \label{04}
 \end{equation}
for all \(N\ge d=2r\), which implies \[ c^2-a_m^2>0, \qquad m=1,\dots,r. \]
It is an easy exercise for one to check that (\ref{04}) does not hold if \(|u|<c\), or if \(|u|>c\). It is true, if and only if,
\[
|u|=c,
\]
that is, if and only if, \(s=0\) or \(s=2c=2N-d+1\), which are the only
real zeros of \(Q_N(s)\) when \(d\) is even.\\
To complete the proof we must prove that all zeros of $Q_{N}(s)$ are simple. Assume that $Q_{N}(s_{0})=0$ for some $s_{0}$. Then,
\begin{equation} Q_N'(s_0)=\sum_{j=0}^{d-1}\frac{1}{N-j-s_0}. \label{1004}
 \end{equation} Suppose that \(s_0\notin\mathbb R\), with \[ s_0=x+iy, \qquad y\neq 0. \] Then,
\[\operatorname{Im} \left(\frac{1}{N-j-s_0}\right) = \frac{y}{(N-j-x)^2+y^2}. \] 
Hence,
 \[ Q_N'(s_0)\neq 0. \] If \(s_0=0\), then by (\ref{1004}), \[ Q_N'(0)=\sum_{j=0}^{d-1}\frac{1}{N-j}>0. \] If \(d\) is even and \(s_0=2N-d+1\), then again by (\ref{1004}), \[ Q_N'(2N-d+1) = \sum_{j=0}^{d-1}\frac{1}{N-j-(2N-d+1)} = -\sum_{j=0}^{d-1}\frac{1}{N-d+1+j}<0. \] Thus \(Q_N'(s_0)\neq 0\) for all zeros \(s_0\) of \(Q_N\). $\hfill \blacksquare$

\textbf{Proof of Theorem \ref{thm:fullI}.} For the reader's convenience, we recall that the quantities used in the following proof are defined in relations (\ref{33a})-(\ref{38}). We start with the quantity \(A_N\). For each $j=0,\cdots,d-1$ and for \(M\ge 0\) we have
\[
\frac{1}{N-j}
=
\sum_{m=0}^{M}\frac{j^m}{N^{m+1}}
+
\mathcal{O}(N^{-M-2}),
\qquad N\to\infty.
\]
Summing over $j$ the above yields
\[
A_N\sim \sum_{m=0}^{\infty}\frac{S_m(d)}{N^{m+1}}.
\]
Since, \(S_0(d)=d\) and \(\sigma_m(d)=S_m(d)/d\), for \(m\ge 1\), we have
\[
\sum_{m=0}^{\infty}\frac{S_m(d)}{N^{m+1}}
=
\frac{d}{N}
\left(
1+\sum_{m=1}^{\infty}\sigma_m(d)N^{-m}
\right).
\]
To derive the full asymptotic expansion of \(A_N^{-1}\) we compute the coefficients \(c_n(d)\) such that
\[
\left(
1+\sum_{m=1}^{\infty}\sigma_m(d)x^m
\right)
\left(
\sum_{n=0}^{\infty}c_n(d)x^n
\right)
=1.
\]
Equating coefficients of equal powers of \(x\), we get
\[
c_0(d)=1,
\qquad
c_n(d)
=
-\sum_{m=1}^{n}\sigma_m(d)c_{n-m}(d),
\qquad n\ge 1.
\]
It follows that
\[
\frac{1}{A_N}
\sim
\frac{N}{d}\sum_{n=0}^{\infty}c_n(d)N^{-n},
\]
or, for every positive integer $M$ as $N \rightarrow \infty$
\begin{equation}
\frac{1}{A_N}
=
\frac{N}{d}\sum_{n=0}^{M}c_n(d)N^{-n}
+
\mathcal{O}(N^{-M}).
\label{40}
\end{equation}
Similarly, for the quantity $A_N^{-2}$ we have
\[
\frac{1}{A_N^2}
\sim
\frac{N^2}{d^2}\sum_{n=0}^{\infty}e_n(d)N^{-n},
\]
or, for every positive integer $M$ as $N \rightarrow \infty$
\begin{equation}
\frac{1}{A_N^2}
=
\frac{N^2}{d^2}\sum_{n=0}^{M}e_n(d)N^{-n}
+
\mathcal{O}(N^{1-M}).
\label{41}
\end{equation}
Next, we work on the full asymptotic expansion of \(B_N\). For fixed \(i,j\), we have
\[
\frac{1}{(N-i)(N-j)}
\sim
\sum_{m=0}^{\infty}
\left(
\sum_{\ell=0}^{m}i^\ell j^{m-\ell}
\right)
N^{-m-2}.
\]
Summing over  \(0\le i<j\le d-1\), we get
\[
B_N
\sim
\sum_{m=0}^{\infty}U_m(d)N^{-m-2},
\]
or, equivalently, for every \(M\ge 0\) we have as $N \rightarrow \infty$
\begin{equation}
B_N
=
\sum_{m=0}^{M}U_m(d)N^{-m-2}
+
\mathcal{O}(N^{-M-3}).
\label{42}
\end{equation}
\medskip
Now, regarding the full asymptotic expansion of \(H_N\) everything is known thanks to the celebrated Euler--Maclaurin summation formula (see, e.g., \cite{BO}): 
\[
H_N
\sim
\log N+\sum_{r=0}^{\infty}u_rN^{-r},\,\,\,N\rightarrow \infty,
\]
where 
\[
u_0=\gamma,
\qquad
u_1=\frac12,
\qquad
u_{2q}=-\frac{\mathrm B_{2q}}{2q},\ \ q\ge 1,
\qquad
u_{2q+1}=0,\ \ q\ge 1,
\]
and $B_j$ denotes the $j$-th Bernoulli number  defined by the exponential generating function: $ x/(e^x-1)=\sum_{n=0}^{\infty}B_n \frac{x^n}{n!}.$ Hence, for every \(M\ge 0\),
\begin{equation}
H_N
=
\log N+\sum_{r=0}^{M}u_rN^{-r}
+
\mathcal{O}(N^{-\left(M+1\right)}).
\label{43}
\end{equation}
\noindent
Set
\begin{equation}
\mu_m(d):=
\frac1d\sum_{r=0}^{m+1}c_{m+1-r}(d)h_r.
\label{44}
\end{equation}
Now the full asymptotic expansion of $\frac{H_N}{A_N}$ follows easily from relations (\ref{40}) and (\ref{43}). We have as $N \rightarrow \infty$
\begin{equation}
\frac{H_N}{A_N}
\sim
\frac{N}{d}\log N+\frac{\gamma}{d}N
+\sum_{m=0}^{\infty}\bigl(\lambda_m(d)\log N+\mu_m(d)\bigr)N^{-m}.
\label{45}
\end{equation}
Similarly, regarding the full asymptotic expansion of $\frac{B_N}{A_N^2}$ we have thanks to relations (\ref{41}) and (\ref{42})
\[
\frac{B_N}{A_N^2}
=
\left(
\sum_{r=0}^{M}U_r(d)N^{-r-2}+\mathcal{O}(N^{-M-3})
\right)
\left(
\frac{N^2}{d^2}\sum_{n=0}^{M}e_n(d)N^{-n}+\mathcal{O}(N^{1-M})
\right).
\]
Set
\begin{equation}
\rho_m(d):=
\frac1{d^2}\sum_{r=0}^{m}U_r(d)e_{m-r}(d).
\label{46}
\end{equation}
Thus,
\begin{equation}
\frac{B_N}{A_N^2}
\sim
\sum_{m=0}^{\infty}\rho_m(d)N^{-m}.
\label{47}
\end{equation}
\noindent
For convenience, let
\begin{equation}
\nu_m(d):=\mu_m(d)+\rho_m(d).
\label{48}
\end{equation}
Now by invoking relations (\ref{45}) and (\ref{47}) we have the desired result and the proof is completed. $\hfill \blacksquare$\\

\textbf{Proof of Theorem \ref{thm:fullII}.} As in the proof of Theorem \ref{result2} we have
\begin{equation}
\mathbb{E}[T_{N,1}]=
\sum_{j=1}^{N}\Bigg(
1-\prod_{r=0}^{d-1}\left(1-\frac{x}{1-r/N}\right)
\Bigg)^{-1},
\qquad x=\frac{j}{N}.
\label{a}
\end{equation}
Since $s=0$ is a simple pole of the function $Q_N (s)$ (of relation (\ref{3}), see the proof of Theorem \ref{thm:poles}) we have that the function
\[
S_N(x)
:=
\frac{1-\prod_{r=0}^{d-1}\left(1-\frac{x}{1-r/N}\right)}{x},
\qquad x>0,
\]
is extended smoothly to \(x=0\). For all fixed $d$ and for every fixed \(L\ge 0\) we have as $N\rightarrow \infty$
\[
S_N(x)
=
s_0(x;d)
+
s_1(x;d)N^{-1}
+\cdots+
s_L(x;d)N^{-L}
+
O(N^{-\left(L+1\right)}),
\]
uniformly for \(x\in[0,1]\), where all \(s_m(\,\cdot\,;d)\) are smooth functions, $m=0,1,\cdots,L$. Clearly,
\[
s_0(x;d)
=
\frac{1-(1-x)^d}{x},
\qquad s_0(0;d)=d,\,\,\, x>0,
\]
Of course, $S_N\rightarrow s_0$ uniformly in $[0,1]$, and $s_0$ has a positive minimum value, i.e., $s_0(x;d)\geq  m(d)$. Hence, for sufficiently large $N$: 
$S_N(x)\ge \frac{m(d)}{2}>0, \,x\in[0,1]$. It follows that, for every fixed \(L\ge 0\), there exist smooth
functions \(b_0(\,\cdot\,;d),\dots,b_L(\,\cdot\,;d)\):
\begin{equation}
\frac{1}{S_N(x)}
=
b_0(x;d)
+
b_1(x;d)N^{-1}
+\cdots+
b_L(x;d)N^{-L}
+
O(N^{-\left(L+1\right)})\label{a1}
\end{equation}
uniformly for \(x\in[0,1]\), where 
\begin{equation}
b_0(x;d)=\frac{1}{s_0(x;d)}\quad\text{and}\quad b_m(x;d)=-\frac{1}{s_0(x;d)}\sum_{j=1}^{m}s_j (x;d)\,b_{m-j}(x;d),\,m\geq1.\label{a2}
\end{equation}
Let us set
\begin{equation}
a_m(d):=b_m(0;d).\label{a3}
\end{equation}
It follows that
\begin{equation}
b_m(x;d)=a_m(d)+x\,h_m(x;d),\label{a4}
\end{equation}
where \(h_m(\,\cdot\,;d)\) is smooth on \([0,1]\). By invoking relations ((\ref{a1})--(\ref{a4})) in (\ref{a}), and using that $x=\frac{j}{N}$, we get as $N\rightarrow \infty$
\[
\mathbb{E}[T_{N,1}]=
\sum_{m=0}^{L}
a_m(d)N^{1-m}H_N
+
\sum_{m=0}^{L}
N^{-m}\sum_{j=1}^{N}h_m(j/N;d)
+
O(N^{-L}\log N).
\]
From the Euler-Maclaurin summation formula we have for the harmonic numbers (as already seen) 
\[
H_N
\sim
\log N+\gamma+\frac{1}{2N}
-\sum_{r=1}^{\infty}\frac{B_{2r}}{2r}\,N^{-2r},\, N\rightarrow \infty,
\]
as well as,
\[
\sum_{j=1}^{N}h_m(j/N;d)
\sim
N\int_0^1 h_m(x;d)\,dx
+
\frac{h_m(1;d)-h_m(0;d)}{2}
\]
\[
\qquad
+
\sum_{r=1}^{\infty}
\frac{B_{2r}}{(2r)!}
\bigl(
h_m^{(2r-1)}(1;d)-h_m^{(2r-1)}(0;d)
\bigr)N^{1-2r}.
\]
By collecting terms we get
\[
\mathbb{E}[T_{N,1}]
\sim
\frac{N}{d}\log N
+\left(\frac{\gamma}{d}+
C_d\right)N
+
\sum_{j=0}^{\infty}
\bigl(\alpha_{j+1}(d)\log N+u_j(d)\bigr)N^{-j},
\]
where,
\begin{equation*}
C_d:
=
\int_0^1
\left(
\frac{1}{1-(1-x)^d}-\frac{1}{dx}
\right)\,dx,
\end{equation*}
\[
u_j(d)
=
\gamma\,a_{j+1}(d)
+
\int_0^1 h_{j+1}(x;d)\,dx
+
\frac{a_j(d)+h_j(1;d)-h_j(0;d)}{2}
\]
\[
\qquad
-
\sum_{r=1}^{\lfloor (j+1)/2\rfloor}
\frac{B_{2r}}{2r}\,a_{j+1-2r}(d)
+
\sum_{r=1}^{\lfloor (j+1)/2\rfloor}
\frac{B_{2r}}{(2r)!}
\bigl(
h_{j+1-2r}^{(2r-1)}(1;d)-h_{j+1-2r}^{(2r-1)}(0;d)
\bigr),
\]
and, as usual, $\lfloor x\rfloor$ denotes the greatest integer less than or equal to $x$. This completes the proof. $\hfill \blacksquare$\\



\end{document}